\theoremstyle{plain}
\newtheorem{thm}{Theorem}[section]
\newtheorem*{thm*}{Theorem}
\newtheorem*{cor*}{Corollary}
\newtheorem{prop}[thm]{Proposition}
\newtheorem{lem}[thm]{Lemma}
\newtheorem{cor}[thm]{Corollary}
\newtheorem*{claim*}{Claim}
\theoremstyle{definition}
\newtheorem{defn}[thm]{Definition}
\newtheorem{ex}[thm]{Example}
\newtheorem{fact}[thm]{Fact}
\newtheorem{setting}[thm]{Setting}
\theoremstyle{remark}
\numberwithin{equation}{thm}
\def\Ext{\operatorname{Ext}}
\def\Im{\operatorname{Im}}
\def\Hom{\operatorname{Hom}}
\def\rank{\mathrm{rank}}
\def\e{\mathrm{e}}
\def\m{\mathfrak m}
\def\n{\mathfrak n}
\def\K{\mathrm{K}}
\newcommand{\rme}{\mathrm{e}}
\newcommand{\rmr}{\mathrm{r}}
\newcommand{\rmK}{\mathrm{K}}
\newcommand{\rmQ}{\mathrm{Q}}
\newcommand{\calR}{\mathcal{R}}
\newcommand{\calS}{\mathcal{S}}
\newcommand{\calT}{\mathcal{T}}
\newcommand{\fka}{\mathfrak{a}}
\newcommand{\fkc}{\mathfrak{c}}
\newcommand{\fkm}{\mathfrak{m}}
\newcommand{\fkp}{\mathfrak{p}}
\newcommand{\mapright}[1]{%
\smash{\mathop{%
\hbox to 1cm{\rightarrowfill}}\limits^{#1}}}
\newcommand{\mapleft}[1]{%
\smash{\mathop{%
\hbox to 1cm{\leftarrowfill}}\limits_{#1}}}
\def\depth{\operatorname{depth}}
\def\tr{\mathrm{tr}}
\title[AGL rings arising from fiber products]{Almost Gorenstein rings arising from fiber products}
\author{Naoki Endo}
\address{Global Education Center, Waseda University, 1-6-1 Nishi-Waseda, Shinjuku-ku, Tokyo 169-8050, Japan}
\email{naoki.taniguchi@aoni.waseda.jp}
\urladdr{http://www.aoni.waseda.jp/naoki.taniguchi/}
\author{Shiro Goto}
\address{Department of Mathematics, School of Science and Technology, Meiji University, 1-1-1 Higashi-mita, Tama-ku, Kawasaki 214-8571, Japan}
\email{shirogoto@gmail.com}
\author{Ryotaro Isobe}
\address{Department of Mathematics and Informatics, Graduate School of Science and Technology, Chiba University, 1-33 Yayoi-cho, Inage-ku, Chiba, 263-8522, Japan}
\email{r.isobe.math@gmail.com}
\thanks{2010 {\em Mathematics Subject Classification.} 13H10, 13H15, 18A30.}
\thanks{{\em Key words and phrases.} fiber product, Cohen-Macaulay ring, Gorenstein ring, almost Gorenstein ring}
\thanks{ The first author was partially supported by JSPS Grant-in-Aid for Young Scientists (B) 17K14176 and Waseda University Grant for Special Research Projects 2019C-444, 2019E-110. The second author was partially supported by JSPS Grant-in-Aid for Scientific Research (C) 16K05112.}
\begin{document}
\maketitle

\setlength{\baselineskip}{14.7pt}

\begin{abstract}
The purpose of this paper is, as part of the stratification of Cohen-Macaulay rings, to investigate the question of when the fiber products are almost Gorenstein rings. We show that the fiber product $R \times_T S$ of Cohen-Macaulay local rings $R$, $S$ of the same dimension $d>0$ over a regular local ring $T$ with $\dim T=d-1$ is an almost Gorenstein ring if and only if so are $R$ and $S$. Besides, the other generalizations of Gorenstein properties are also explored.
\end{abstract}

%{\footnotesize \tableofcontents}

%%%%%%%%%%%%%%%%%%%%%%%%%%%%%%%%%%%%%%%%%%%%%%%%%%%%%%%%%%%%%%%%%%%%%%%%%%%%%%%%%%%%%%%%%%%%%%%%%%%%%%%%%%%%%%%%%%%%%%%%%%%%%%%%%%%%%%%%%%%%%%%%%%%%%%%%%%%%%%%%%%%%%%%%%%%%%%%%%%%%%%%%%%%%%%%%%%%%%%%%%%%%%%%%%%%%%%%%%%%%%%%%%%%%%%%%%%%%%%%%%%%%%%%%%%%%%%%%%%%%%%%%%%%%%%%%%%%%%%%%%%%%%%%%%%%%%%%%%%%%%%%%%%%%%%%%%%%%%%%%%%%%%%%%%%%%%%%%%%%%

\section{Introduction}

The fiber product of homomorphisms $R \overset{f}{\longrightarrow} T \overset{g}{\longleftarrow} S$ of rings is defined by  
$$
R\times_TS = \{(a, b) \in R \times S \mid f(a) = g(b)\}
$$
which forms a subring of $R \times S$. The notion of fiber products, in general, appears not only commutative algebra but in diverse branches of mathematics, and it often plays a crucial role in many situations. There are a huge number of preceding researches about a relation of ring theoretic properties (e.g., Cohen-Macaulay, Gorenstein) between the fiber products and the base rings. The reader may consult \cite{AAM, CSV, Marco, Mo, NS, NSTV, Ogoma, Shapiro} for details. Among them, by \cite{Ogoma}, the fiber product of one-dimensional Cohen-Macaulay local rings over the residue class field is Gorenstein if and only if the base rings are discrete valuation rings (abbr. DVRs), and hence the fiber products are rarely Gorenstein rings. Nevertheless, for example, the fiber product of hypersurfaces might have good properties even though it is not Gorenstein, which we will clarify in the present paper.

An almost Gorenstein ring is one of the generalizations of a Gorenstein ring, defined by a certain embedding of the rings into their canonical modules. The motivation of the theory derives from the strong desire to stratify Cohen-Macaulay rings, finding new and interesting classes which naturally cover the Gorenstein rings. The class of almost Gorenstein rings could be a very nice candidate for such classes. Originally, the theory of almost Gorenstein rings was established by V. Barucci and R. Fr{\"o}berg \cite{BF} in the case where the local rings are analytically unramified of dimension one. They mainly considered the numerical semigroup rings and gave pioneering results. However, since the notion given by \cite{BF} was not flexible for the analysis of analytically ramified case, so that in 2013 the second author, N. Matsuoka and T. T. Phuong \cite{GMP} proposed the notion over one-dimensional Cohen-Macaulay local rings, using the first Hilbert coefficient of canonical ideals. Finally, in 2015 the first and second authors, and R. Takahashi \cite{GTT} gave the definition of almost Gorenstein graded/local rings of arbitrary dimension, by means of Ulrich modules.

The purpose of this paper is to explore the question of when the fiber product is an almost Gorenstein ring, and the main result of this paper is stated as follows.

\begin{thm}\label{1.1}
Let $(R, \m), (S, \n)$ be Cohen-Macaulay local rings with $d=\dim R=\dim S>0$, and $T$ be a regular local ring with $\dim T=d-1$ possessing an infinite residue class field. Let $f: R \to T$, $g: S \to T$ be surjective homomorphisms. Suppose that $A=R \times_T S$ has the canonical module $\rmK_A$ and that $\rmQ(A)$ is a Gorenstein ring. Then the following conditions are equivalent. 
\begin{enumerate}[$(1)$]
\item The fiber product $A=R\times_TS$ is an almost Gorenstein ring.
\item $R$ and $S$ are almost Gorenstein rings.
\end{enumerate}
\end{thm}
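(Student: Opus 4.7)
The plan is to reduce the almost Gorenstein property of $A$ to that of $R$ and $S$ via the canonical module structure arising from the fiber product. The starting point is the defining short exact sequence
$$
0 \to A \to R \oplus S \to T \to 0
$$
of $A$-modules. First I would confirm that $A$ is Cohen-Macaulay of dimension $d$ by applying the long exact sequence of local cohomology $\H^{\bullet}_{\fkm_A}(-)$: the vanishings $\H^i_{\fkm_A}(R) = \H^i_{\fkm_A}(S) = 0$ for $i < d$ and $\H^i_{\fkm_A}(T) = 0$ for $i < d-1$ force $\H^i_{\fkm_A}(A) = 0$ for $i < d$. Dualizing via $\Ext^{\bullet}_A(-, \rmK_A)$, using that $T$ is regular of dimension $d-1$ (so $\rmK_T = T$) and that $\Hom_A(T, \rmK_A) = 0$ (the annihilator of $T$ has positive grade in the equidimensional Cohen-Macaulay ring $A$), one obtains the fundamental exact sequence
$$
0 \to \rmK_R \oplus \rmK_S \to \rmK_A \to T \to 0.
$$

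For the implication $(1) \Rightarrow (2)$, starting from an almost Gorenstein exact sequence $0 \to A \to \rmK_A \to C \to 0$ with $C$ an Ulrich $A$-module, I would apply $\Hom_A(R, -)$ (which recovers $\rmK_R = \Hom_A(R, \rmK_A)$) and combine with the canonical module sequence to produce an exact sequence $0 \to R \to \rmK_R \to C_R \to 0$ of $R$-modules. The Ulrich property of $C_R$ is checked by a multiplicity comparison, using that a parameter system of $A$ realizing $C$ as Ulrich specializes to a parameter system of $R$ after invoking the infinite residue field for generic choices; the argument for $S$ is symmetric.

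For $(2) \Rightarrow (1)$, the strategy is to combine almost Gorenstein structures on $R$ and $S$ into one on $A$. Given $\phi_R : R \to \rmK_R$ and $\phi_S : S \to \rmK_S$ with Ulrich cokernels $C_R, C_S$, I would construct $\phi_A : A \to \rmK_A$ with Ulrich cokernel. The example $A = k[[x,y]]/(xy)$ (with $R, S$ discrete valuation rings, almost Gorenstein with $C_R = C_S = 0$) shows that the naive composition $A \hookrightarrow R \oplus S \xrightarrow{\phi_R \oplus \phi_S} \rmK_R \oplus \rmK_S \hookrightarrow \rmK_A$ does not generally yield an Ulrich cokernel: here the cokernel has length $2$ but is not annihilated by $\fkm_A$. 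The fix is to choose $\phi_R, \phi_S$ compatibly relative to their induced data in the gluing module $T$ (using the infinite residue field for generic lifts); the resulting cokernel $\rmK_A / \phi_A(A)$ then fits into an extension assembled from $C_R, C_S,$ and $T$, which is shown to be Ulrich over $A$ by exhibiting a parameter subsystem of $\fkm_A$ realizing the Ulrich reduction.

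The main obstacle is the $(2) \Rightarrow (1)$ direction: coordinating the Ulrich cokernels $C_R, C_S$ with the $T$-quotient in the canonical module sequence so that the combined cokernel inherits the Ulrich property over $A$. The hypothesis that $\rmQ(A)$ is Gorenstein is essential for representing the canonical modules as ideals and tracking multiplicities through the fiber product, while the infinite residue field allows the generic selections required for Ulrich parameters.
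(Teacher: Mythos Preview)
Your overall strategy---working directly with the canonical module exact sequence $0 \to \rmK_R \oplus \rmK_S \to \rmK_A \to T \to 0$ in all dimensions---is genuinely different from the paper, which instead argues by induction on $d$: it first settles $d=1$ by an explicit construction of a fractional canonical ideal $X$ of $A$ inside $\rmQ(A)=\rmQ(R)\times\rmQ(S)$ and checks the criterion $JX=J$, and then for $d\ge 2$ it passes to $A/(\alpha)$ for a well-chosen regular element $\alpha=(a,b)$ (superficial for the Ulrich cokernels, with image a regular parameter of $T$), using the isomorphism $A/(\alpha)\cong R/(a)\times_{T/(x)}S/(b)$ and \cite[Theorem 3.7]{GTT}. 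Your approach avoids this reduction, but as written both directions have real gaps.

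For $(1)\Rightarrow(2)$, the step ``apply $\Hom_A(R,-)$ to $0\to A\to \rmK_A\to C\to 0$ and obtain $0\to R\to \rmK_R\to C_R\to 0$'' does not go through. Writing $R=A/I$ with $I=\{(0,s):s\in\Ker g\}$, one has $\Hom_A(R,A)=(0:_AI)=\Ker f\times 0\cong \Ker f$, which is a proper ideal of $R$, not $R$ itself (since $T\ne 0$). So the left term of the resulting sequence is $\Ker f$, not $R$, and there is no evident diagram chase with the dual sequence that manufactures the missing copy of $R$. You would also need to identify $\Hom_A(R,C)$ and control the connecting map into $\Ext^1_A(R,A)$; none of this is addressed.

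For $(2)\Rightarrow(1)$, you correctly observe that the naive composite $A\hookrightarrow R\oplus S\to \rmK_R\oplus\rmK_S\hookrightarrow \rmK_A$ can fail (your $k[[x,y]]/(xy)$ example is apt), but the proposed remedy---``choose $\phi_R,\phi_S$ compatibly relative to $T$''---is not a method, only a hope. Concretely, the cokernel you describe sits in an extension built from $C_R$, $C_S$, and $T$; while each piece is Ulrich over $A$ (for $T$ this uses $\mu_A(T)=1=\rme(T)$), an extension of Ulrich modules need not be Ulrich unless you can exhibit a \emph{single} $(d-1)$-element ideal $\fkq\subseteq J$ with $J\cdot(\text{cokernel})=\fkq\cdot(\text{cokernel})$. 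Producing such $\fkq$ simultaneously adapted to $C_R$, $C_S$, and the nontrivial extension class by $T$ is exactly the crux, and nothing in your outline supplies it. The paper sidesteps this entirely by cutting down to dimension one, where the Ulrich condition becomes the elementary statement $JX=J$, and the ``compatibility with $T$'' is built into the explicit formula $X=(K\times L)+A\psi$ with $\psi=(g_1,g_2)$.
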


%%%%%%%%%%%%%%%%%%%%%%%%%%%%%%%%%%%%%%%%%%%%%%%%%%%%%%%%%%%%%%%%%%%%%%%%

In what follows, unless otherwise specified, let $R$ be a one-dimensional Cohen-Macaulay local ring with maximal ideal $\fkm$. Let $\rmQ(R)$ be the total quotient ring of $R$ and $\overline{R}$ the integral closure of $R$ in $\rmQ(R)$. For each finitely generated $R$-module $M$, let $\mu_R(M)$ (resp. $\ell_R(M)$) denote the number of elements in a minimal system of generators of $M$ (resp. the length of $M$). For $R$-submodules $X$, $Y$ of $\rmQ(R)$, let $X:Y = \{a \in \rmQ(R) \mid a Y \subseteq X\}$. We denote by $\mathrm{K}_R$ the canonical module of $R$. We set $\rmr(R) = \ell_R(\Ext^1_R(R/\m, R))$ the Cohen-Macaulay type of $R$.

%%%%%%%%%%%%%%%%%%%%%%%%%%%%%%%%%%%%%%%%%%%%%%%%%%%%%%%%%%%%%%%%%%%%%%%%%%%%%%%%%%%%%%%%%%%%%%%%%%%%%%%%%%%%%%%%%%%%%%%%%%%%%%%%%%%%%%%%%%%%%%%%%%%%%%%%%%%%%%%%%%%%%%%%%%%%%%%%%%%%%%%%%%%%%%%%%%%%%%%%%%%%%%%%%%%%%%%%%%%%%%%%%%%%%%%%%%%%%%%%%%%%%%%%%%%%%%%%%%%%%%%%%%%%%%%%%%%%%%%%%%%%%%%%%%%%%%%%%%%%%%%%%%%%%%%%%%%%%%%%%%%%%%%%%%%%%%%%%%%%

\section{Basic facts}

The purpose of this section is to summarize some basic properties of fiber products. 
For a moment, let $R$, $S$, $T$ be arbitrary commutative rings, and $f:R \to T$, $g: S \to T$ denote the homomorphism of rings. We set 
$$
A = R\times_TS = \{(a, b) \in R \times S \mid f(a) = g(b)\}
$$
and call it {\it the fiber product of $R$ and $S$ over $T$ with respect to $f$ and $g$}, which forms a subring of $B=R \times S$. We then have a commutative diagram
\[
\xymatrix{
          &
R \ar[rd]^{f}  &  \\
A \ar[rd]_{p_2}\ar[ur]^{p_1} &  & T\\
& S \ar[ur]_{g}
}
\]
of rings, where $p_1 : A \to R, (x, y)\mapsto x$, $p_2: A \to S, (x, y)\mapsto y$ stand for the projections. Hence we get an exact sequence
$$
0 \longrightarrow A \overset{i}{\longrightarrow} B \overset{\varphi}{\longrightarrow} T
$$
of $A$-modules, where $\varphi = \left[\begin{smallmatrix}
f \\
-g
\end{smallmatrix}\right]$. The map $\varphi$ is surjective if either $f$ or $g$ is surjective. Besides, if both $f$ and $g$ are surjective, then $T$ is cyclic as an $A$-module, so that $B$ is a module-finite extension over $A$. 
Therefore we have the following (see e.g., \cite{AAM}).

\begin{lem}\label{2.1}
Suppose that $f:R \to T$ and $g: S \to T$ are surjective. Then the following assertions hold true. 
\begin{enumerate}[$(1)$]
\item $A$ is a Noetherian ring if and only if $R$ and $S$ are Noetherian rings.
\item $(A, J)$ is a local ring if and only if  $(R, \m)$ and $(S, \n)$ are local rings. When this is the case, $J = (\m \times \n) \cap A$.
\item If $(R, \m), (S, \n)$ are Cohen-Macaulay local rings with $\dim R=\dim S = d>0$ and $\depth T\ge d-1$, then $(A, J)$ is a Cohen-Macaulay local ring and $\dim A = d$.
\end{enumerate}
\end{lem}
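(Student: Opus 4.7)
The three assertions all rest on a single preliminary observation: $B = R \times S$ is module-finite over $A$. I would first check this by showing $B = A + A\cdot e$ where $e = (1,0) \in B$. Given $b = (r,s) \in B$, surjectivity of $f$ and $g$ lets me pick $a = (a_1, a_2) \in A$ with $f(a_1) = f(r) - g(s)$, and then $b - ae = (r - a_1, s)$ is directly verified to satisfy the fiber-product equation, so it lies in $A$. Hence $A \hookrightarrow B$ is an integral extension and $B$ is finitely generated as an $A$-module.

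For (1), if $R$ and $S$ are Noetherian, then so is $B$ as a ring, and module-finiteness over $A$ combined with the Eakin--Nagata theorem forces $A$ to be Noetherian. Conversely, the projections $p_1 : A \to R$ and $p_2 : A \to S$ are surjective (lift $r \in R$ to $(r, s) \in A$ by choosing $s \in S$ with $g(s) = f(r)$), so Noetherianness descends to $R$ and $S$.

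For (2), first note that $T$ is local with maximal ideal $\m_T = f(\m) = g(\n)$, and $f^{-1}(\m_T) = \m$, $g^{-1}(\m_T) = \n$. The two maximal ideals of $B$ are $\m \times S$ and $R \times \n$, and the decisive computation is that both contract to the same ideal of $A$: if $(a,b) \in A$ with $a \in \m$, then $g(b) = f(a) \in \m_T$ forces $b \in \n$, and symmetrically. Hence $(\m \times S) \cap A = (R \times \n) \cap A = (\m \times \n) \cap A = J$. Because $B$ is integral over $A$, every maximal ideal of $A$ is the contraction of a maximal ideal of $B$, so $J$ is the unique maximal ideal. The converse direction follows by transferring locality through the surjections $p_1$, $p_2$.

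For (3), I apply the depth lemma to the fundamental exact sequence $0 \to A \to B \to T \to 0$ of $A$-modules. Integrality gives $\dim A = \dim B = \max\{\dim R, \dim S\} = d$. Since $J$ acts on $T$ via the composition $A \to R \to T$ and the image of $J$ in $T$ is $\m_T$, one has $\depth_A T = \depth T \geq d-1$. On the other hand $\depth_A B = d$: the radical of $JB$ in $B$ is $\m \times \n$, and $\depth_B(\m \times \n, B) = \min\{\depth_R(\m, R), \depth_S(\n, S)\} = d$ since both $R$ and $S$ are Cohen--Macaulay of dimension $d$. The depth lemma then gives $\depth A \geq \min\{d, (d-1)+1\} = d$, which combined with $\dim A = d$ yields the Cohen--Macaulayness. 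The one step that requires a moment's thought is the contraction identity in (2); everything else is a direct application of standard tools, namely Eakin--Nagata, going-up for integral extensions, and the depth lemma applied to the short exact sequence produced by $\varphi$.
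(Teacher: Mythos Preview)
Your proof is correct. The paper does not actually supply a proof of this lemma; it records the observation that $T$ is a cyclic $A$-module (hence $B$ is module-finite over $A$ via the exact sequence $0 \to A \to B \to T \to 0$) and then simply refers the reader to \cite{AAM} for the assertions (1)--(3). Your argument therefore fills in all the details the paper omits, and your explicit presentation $B = A + A{\cdot}(1,0)$ is just a concrete version of the paper's ``$B/A \cong T$ is cyclic''. One small simplification for (3): since $B = R \oplus S$ as $A$-modules via the surjections $p_1, p_2$, one can compute $\depth_A B = \min\{\depth_A R, \depth_A S\} = \min\{\depth_R(\m,R), \depth_S(\n,S)\} = d$ directly, bypassing the step through $JB$; but your route through $JB = \m \times \n$ is also valid.
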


Suppose that $(R, \m)$, $(S, \n)$ are Noetherian local rings with a common residue class field $k=R/\m=S/\n$, and $f : R \to k$, $g : S \to k$ denote the canonical surjective maps. 

With this notation, we then have the following. Although the assertion $(3)$ follows from the result of Ogoma (\cite{Ogoma}), let us give a brief proof for the sake of completeness. For a Noetherian local ring $R$, we denote by $\rme(R)$ (resp. $v(R)$) the multiplicity (resp. the embedding dimension) of $R$. 

\begin{prop}\label{2.2}
 The following assertions hold true.
\begin{enumerate}[$(1)$]
\item $v(A) = v(R) + v(S)$.
\item If $\dim R=\dim S >0$, then $\rme(A) = \rme(R) + \rme(S)$.
\item If $R$, $S$ are Cohen-Macaulay local rings with $\dim R=\dim S=1$, then $A=R\times_k S$ is Gorenstein if and only if  $R$ and $S$ are DVRs.
\end{enumerate}
\end{prop}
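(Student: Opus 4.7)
The plan is to handle (1) and (2) by directly computing the powers of the maximal ideal $J$ of $A$, and to treat (3) via an analysis of the canonical module $\rmK_A$ using the defining short exact sequence $0 \to A \to R \times S \to k \to 0$ of $A$-modules.

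For (1) and (2) I would argue as follows. Because $f$ and $g$ vanish on $\m$ and $\n$ respectively, any element of $\m \times \n$ lies in $A$, so by Lemma \ref{2.1}(2) the maximal ideal is $J = \m \times \n$. An easy induction using componentwise multiplication then yields $J^n = \m^n \times \n^n$ for every $n \geq 1$, hence
\[
J^n/J^{n+1} \;\cong\; \m^n/\m^{n+1} \,\oplus\, \n^n/\n^{n+1}
\]
as $k$-vector spaces. Taking $n=1$ proves (1); for (2), the Hilbert function of $A$ with respect to $J$ equals the sum of the Hilbert functions of $R$ and $S$ in every positive degree, and comparing leading coefficients of the associated Hilbert polynomials (of degree $d-1$, since $\dim R = \dim S = d > 0$) gives $\rme(A) = \rme(R) + \rme(S)$.

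For (3), the direction $(\Leftarrow)$ is immediate: if $R, S$ are DVRs with uniformizers $x, y$, then the relation $(x, 0)(0, y) = 0$ together with the decomposition $A = k \oplus (\m \times 0) \oplus (0 \times \n)$ identifies $\widehat A$ with the hypersurface $k[[X, Y]]/(XY)$, which is Gorenstein. For the converse, assume $A$ is Gorenstein, so $\rmK_A \cong A$. I would apply $\operatorname{Hom}_A(-, \rmK_A)$ to the sequence $0 \to A \to R \times S \to k \to 0$. Using that $R$ and $S$ are maximal Cohen-Macaulay $A$-modules with $\operatorname{Hom}_A(R, \rmK_A) = \rmK_R$ and $\operatorname{Ext}^i_A(R, \rmK_A) = 0$ for $i > 0$ (similarly for $S$), and that $\operatorname{Ext}^1_A(k, \rmK_A) \cong k$ by local duality in dimension one, this produces the short exact sequence
\[
0 \;\to\; \rmK_R \oplus \rmK_S \;\to\; \rmK_A \;\to\; k \;\to\; 0.
\]
Substituting $\rmK_A \cong A$ gives $\rmK_R \oplus \rmK_S \cong J$ as $A$-modules. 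Since $J = (\m \times 0) \oplus (0 \times \n) \cong \m \oplus \n$ as $A$-modules, applying $\operatorname{Hom}_A(R, -)$ to this isomorphism---and noting that $\operatorname{Hom}_A(R, \n) = \operatorname{Hom}_A(R, \rmK_S) = 0$, since $\n$ and $\rmK_S$ are torsion-free $S$-modules of positive depth---yields $\m \cong \rmK_R$ as $R$-modules.

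The main task remaining, and the principal obstacle, is to deduce from $\m \cong \rmK_R$ that $R$ is a DVR. Since $\operatorname{End}_R(\rmK_R) = R$, this isomorphism gives $\m : \m = R$. Dualizing $0 \to \m \to R \to k \to 0$ into $R$ yields $\m^{-1}/R \cong \operatorname{Ext}^1_R(k, R) = k^{\rmr(R)} \neq 0$. The ideal $\m \cdot \m^{-1} \subseteq R$ either equals $R$---whence $\m$ is invertible, therefore principal, and $R$ is a DVR---or is contained in $\m$, which forces $\m^{-1} \subseteq \m : \m = R$ and contradicts $\m^{-1}/R \neq 0$. A symmetric argument gives $S$ a DVR, completing the proof.
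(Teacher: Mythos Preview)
Your arguments for (1) and (2) are essentially the paper's: both compute $J^n = \m^n \times \n^n$ and read off embedding dimension and multiplicity from the resulting Hilbert functions.

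For (3)$(\Leftarrow)$ there is a small gap: the identification $\widehat A \cong k[[X,Y]]/(XY)$ presupposes that $R$ and $S$ are equicharacteristic, which is not assumed. The paper instead simply invokes part~(2): if $R,S$ are DVRs then $\rme(A)=2$ and $v(A)=2$, so $\widehat A$ is a one-dimensional Cohen--Macaulay quotient of a two-dimensional regular local ring, hence a hypersurface and therefore Gorenstein. Your argument is easily repaired along these lines.

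For (3)$(\Rightarrow)$ your approach is correct but genuinely different from the paper's. The paper works birationally inside $\rmQ(A)$: applying $\Hom_A(-,A)$ to $0\to A\to B\to k\to 0$ gives $A:B=J$, and then reflexivity yields $B=A:(A:B)=A:J=J:J=(\m:\m)\times(\n:\n)$, forcing $R=\m:\m$ and $S=\n:\n$. You instead dualize into $\rmK_A$ to obtain $0\to\rmK_R\oplus\rmK_S\to\rmK_A\cong A\to k\to 0$, whence $\rmK_R\oplus\rmK_S\cong J=\m\oplus\n$, and then apply $\Hom_A(R,-)$ to strip off the $S$-part and conclude $\m\cong\rmK_R$. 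Both routes terminate at $\m:\m=R$ (yours via $\End_R(\rmK_R)=R$), and your final paragraph deducing that $R$ is a DVR is a clean self-contained version of what the paper cites from \cite{GMP}. The paper's colon-ideal computation is shorter; your canonical-module argument is a bit more structural and yields the extra information $\m\cong\rmK_R$ along the way.
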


\begin{proof}
Since $J=\m\times \n$ is the maximal ideal of $A$, we get $J^{\ell} = \m^{\ell + 1}\times \n^{\ell + 1}$ for every $\ell \ge 0$. 

$(1)$ Follow from the equalities $\ell_A(J/J^2) = \ell_k([\m/\m^2] \oplus [\n/\n^2]) =\ell_R(\m/\m^2) + \ell_S(\n/\n^2)$.

$(2)$ Remember that $f$ and $g$ are surjective. We then have the equalities
\begin{eqnarray*}
\ell_A(A/J^{\ell+1}) &=& \ell_A(A/J) + \ell_A(J/J^{\ell +1}) \\
&=& 1 + \left[\ell_R(\m/\m^{\ell + 1}) + \ell_S(\n/\n^{\ell + 1})\right] \\
&=& 1+\left\{[\ell_R(R/\m^{\ell + 1})-1] + [\ell_S(S/\n^{\ell + 1})-1]\right\}\\
&=& \left[\ell_R(R/\m^{\ell +1})+ \ell_S(S/\n^{\ell +1})\right] -1
\end{eqnarray*}
for every $\ell \ge 0$, which yield that $\rme(A) = \rme(R) + \rme(S)$, because $\dim A = \dim R = \dim S > 0$.

$(3)$ Suppose that $A$ is a Gorenstein ring. By applying the functor $\Hom_A(-, A)$ to the exact sequence $0 \to A \overset{i}{\to} B \overset{\varphi}{\to} k \to 0$ of $A$-modules (here $\varphi = \left[\begin{smallmatrix}
f \\
-g
\end{smallmatrix}\right]$), we get 
$$
0 \to A:B \overset{i}{\to} A \to \Ext^1_A(A/J, A) \to 0
$$
which implies $J = A:B$, because $\Ext^1_A(A/J, A) \cong A/J$. As $A$ is Gorenstein, we have $B= A:(A:B)$, so that the equalities
$$
R \times S = B= A:(A:B) = A:J = J:J = (\m : \m) \times (\n : \n)
$$
hold where the fourth equality comes from the fact that $A$ is not a DVR (Note that $B$ is a module-finite birational extension of $A$). Consequently, $R=\m:\m$ and $S=\n:\n$, whence $R$ and $S$ are DVRs (see e.g., \cite[Lemma 3.15]{GMP}). The converse implication follows from the assertion $(2)$. This completes the proof.
\end{proof}

%%%%%%%%%%%%%%%%%%%%%%%%%%%%%%%%%%%%%%%%%%%%%%%%%%%%%%%%%%%%%%%%%%%%%%%%%%%%%%%%%%%%%%%%%%%%%%%%%%%%%%%%%%%%%%%%%%%%%%%%%%%%%%%%%%%%%%%%%%%%%%%%

\section{Survey on almost Gorenstein rings}

This section is devoted to the definition and some basic properties of almost Gorenstein rings, which we will use throughout this paper. Let $(R, \m)$ be a Cohen-Macaulay local ring with $d=\dim R$, possessing the canonical module $\rmK_R$. 

\begin{defn}\label{3.1}({\cite[Definition 1.1]{GTT}})
We say that $R$ is {\it an almost Gorenstein local ring}, if there exists an exact sequence
$$
0 \to R \to \rmK_R \to C \to 0
$$
of $R$-modules such that $\mu_R(C) = \rme^0_{\m}(C)$, where $\mu_R(C)$ denotes the number of elements in a minimal system of generators for $C$ and
$$
\rme^0_{\m}(C) = \lim_{n \to \infty}\frac{\ell_R(C/\m^{n+1}C)}{n^{d-1}}\cdot (d-1)!
$$ 
is the multiplicity of $C$ with respect to $\fkm$.
\end{defn}

Notice that every Gorenstein ring is an almost Gorenstein ring, and the converse holds if the ring $R$ is Artinian (\cite[Lemma 3.1 (3)]{GTT}). Definition \ref{3.1} requires  that if $R$ is an almost Gorenstein ring, then $R$ might not be Gorenstein but the ring $R$ can be embedded into its canonical module $\K_R$ so that the difference $\K_R/R$ should have good properties.
For any exact sequence
$$
0 \to R \to \rmK_R \to C \to 0
$$
of $R$-modules, $C$ is a Cohen-Macaulay $R$-module with $\dim_RC=d-1$, provided $C \ne (0)$ (\cite[Lemma 3.1 (2)]{GTT}).
Suppose that $R$ possesses an infinite residue class field $R/ \m$. Set $R_1=R/[(0):_RC]$ and let $\m_1$ denote the maximal ideal of $R_1$. 
Choose elements $f_1, f_2, \ldots, f_{d-1} \in \m$ such that $(f_1, f_2, \ldots, f_{d-1})R_1$ forms a minimal reduction of $\m_1$. We then have
$$
\rme_{\m}^0(C) = \rme_{\m_1}^0(C) = \ell_R(C/(f_1, f_2, \ldots, f_{d-1})C) \ge \ell_R(C/\m C) = \mu_R(C). 
$$
Therefore, $\rme_{\m}^0(C) \ge \mu_R(C)$ and we say that $C$ is {\it an Ulrich $R$-module} if $\rme_{\m}^0(C) = \mu_R(C)$, since $C$ is {\it a maximally generated maximal Cohen-Macaulay $R_1$-module} in the sense of \cite{BHU}. Hence, $C$ is an Ulrich $R$-module if and only if $\m C = (f_1, f_2, \ldots, f_{d-1})C$. If $d=1$, then the Ulrich property for $C$ is equivalent to saying that $C$ is a vector space over $R/\m$. Besides, we have the following.

\begin{fact}[\cite{GMP, GTT, K}]\label{3.2}
Suppose that $d=1$ and there exists an $R$-submodule $K$ of the total ring $\rmQ(R)$ of fractions such that $R \subseteq K \subseteq \overline{R}$ and $K \cong \rmK_R$ as an $R$-module, where $\overline{R}$ stands for the integral closure of $R$ in $\rmQ(R)$. Then the following conditions are equivalent.
\begin{enumerate}[$(1)$]
\item $R$ is an almost Gorenstein ring.
\item $\m K \subseteq R$, that is $\m K = \m$.
\item $\m K \cong \m$ as an $R$-module.
\end{enumerate} 
\end{fact}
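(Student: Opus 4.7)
The plan is to exploit the one-dimensional simplification of the Ulrich property --- a finite length $R$-module is Ulrich if and only if it is annihilated by $\m$ --- together with the rigidity of $K$ as a canonical-module representative inside $\overline{R}$.

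First I would form the short exact sequence
\[
0 \longrightarrow R \longrightarrow K \longrightarrow C \longrightarrow 0
\]
from the given inclusion $R \subseteq K$, with $C = K/R$. When $K = R$ all three conditions hold trivially, so I assume $C \ne (0)$. By \cite[Lemma~3.1]{GTT}, $C$ is then Cohen--Macaulay of dimension $d - 1 = 0$, hence of finite length, and the Ulrich condition $\rme^0_\m(C) = \mu_R(C)$ reduces to $\m C = (0)$, equivalently $\m K \subseteq R$. Since $1 \in K$ we always have $\m \subseteq \m K$, while $\m K \subseteq K \subseteq \overline{R}$ forces $\m K \subseteq \m\overline{R}$, which is a proper ideal of $\overline{R}$ (contained in every maximal ideal of $\overline{R}$ lying over $\m$); consequently $\m K \subseteq R$ is the same as $\m K = \m$, so the ``that is'' in (2) is justified. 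This yields $(2)\Rightarrow(1)$ immediately.

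For $(1) \Rightarrow (2)$ I would begin with an arbitrary exact sequence $0 \to R \overset{\phi}{\to} \rmK_R \to C' \to 0$ realizing the almost Gorenstein property, transport it through the isomorphism $\rmK_R \cong K$, and read off an injection $R \hookrightarrow K$ whose image is $R k_0$ for some non-zerodivisor $k_0 \in K$, with $\m K \subseteq R k_0$. The remaining step --- the \emph{main obstacle} --- is to show that the canonical inclusion $1 \mapsto 1$ also has Ulrich cokernel. For this I would use the near-uniqueness of a canonical representative above $R$ and inside $\overline{R}$: any two such submodules of $\rmQ(R)$ differ only by a unit of $\overline{R}$, and this scalar ambiguity is absorbed by the endomorphism-ring identity $K : K = R$ combined with the constraint $k_0 \in K \subseteq \overline{R}$, forcing $\m K \subseteq R$ for the given $K$.

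For $(2) \Leftrightarrow (3)$: $(2) \Rightarrow (3)$ is obvious. For $(3) \Rightarrow (2)$, both $\m K$ and $\m$ are rank-one torsion-free $R$-submodules of $\rmQ(R)$, so any $R$-isomorphism $\m K \cong \m$ must be multiplication by some $u \in \rmQ(R)^{\times}$; thus $\m K = u \m$. The containment $\m \subseteq \m K$ forces $u^{-1} \in \m : \m$, while combining $\m K \subseteq K \subseteq \overline{R}$ with $K : K = R$ pins $u \in \m : \m$ as well, whence $\m K = u \m \subseteq \m$; together with $\m \subseteq \m K$ this gives $\m K = \m$. The very same kind of rigidity argument --- canonical representatives being determined up to $\overline{R}$-units, then collapsed by integrality and the identity $K:K=R$ --- is what underpins the obstacle in $(1) \Rightarrow (2)$ above.
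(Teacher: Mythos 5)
The paper itself gives no proof of Fact \ref{3.2}: it is imported wholesale from \cite{GMP, GTT, K}, so there is no internal argument to compare yours against and I am judging the proposal on its own terms. Your $(2)\Rightarrow(1)$ is complete and correct: $K/R$ has finite length and is killed by $\m$, hence is Ulrich, and your justification that $\m K\subseteq R$ already forces $\m K=\m$ (via $\m K\subseteq \m\overline{R}\cap R\subseteq\m$ together with $\m\subseteq\m K$) is sound.

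The genuine gap sits exactly where you flag ``the main obstacle,'' and it occurs in both $(1)\Rightarrow(2)$ and $(3)\Rightarrow(2)$. In each case you reach the situation $\m K=u\m$ for some unit $u$ of $\rmQ(R)$ with $u^{-1}\in\m:\m$ (in $(1)\Rightarrow(2)$, $u=k_0$ after one rules out the alternative $\m K=Rk_0$, which itself needs an argument since $\ell_R(Rk_0/\m k_0)=1$ only pins $\m K$ down to two possibilities), and you must conclude $u\m\subseteq\m$. You assert this is ``pinned'' or ``absorbed'' by $K:K=R$ together with $k_0\in K\subseteq\overline{R}$, but that identity does not perform this step and no mechanism is supplied: the condition $\m K\subseteq R$ is \emph{not} preserved under multiplication by an arbitrary unit of $\overline{R}$, so the ``uniqueness up to $\overline{R}$-units'' of canonical representatives gives nothing by itself. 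What is actually needed is (i) $u\in\overline{R}$ --- automatic in $(1)\Rightarrow(2)$ since $k_0\in K$, but in $(3)\Rightarrow(2)$ it requires $u\in K:\m\subseteq\overline{R}$, which rests on $\ell_R((K:\m)/K)\le 1$ and the non-DVR hypothesis, not on $K:K=R$ --- and (ii) the observation that a unit of $\overline{R}$ whose inverse lies in the intermediate ring $B=\m:\m$ must be a unit of $B$ (every maximal ideal of $B$ is contracted from $\overline{B}=\overline{R}$ by lying over), whence $u\in B$ and $\m K=u\m\subseteq\m$. Step (ii) is the crux of the whole Fact and is absent from your sketch; the degenerate cases ($C=0$, i.e.\ $R$ Gorenstein, and $R$ a DVR) also need separate, if easy, treatment. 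The proofs in \cite{GMP, GTT} sidestep this delicacy by routing through the Hilbert-coefficient characterization $\e_1(I)\le\rmr(R)$ of a canonical ideal $I$, which is manifestly independent of the choice of the fractional canonical ideal $K$.
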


In this paper, we say that an $R$-submodule $K$ of $\rmQ(R)$ is {\it a fractional canonical ideal} of $R$, if $R \subseteq K \subseteq \overline{R}$ and $K \cong \rmK_R$ as an $R$-module.

One can construct many examples of almost Gorenstein rings (e.g., \cite{CCGT, GGHV, GKMT, GMP, GMTY1, GMTY2, GMTY3, GMTY4, GRTT, GTT, GTT2, H, MM, Mi, T}). The significant examples of almost Gorenstein rings are one-dimensional Cohen-Macaulay local rings of finite Cohen-Macaulay representation type and two-dimensional rational singularities. Because the origin of the theory of almost Gorenstein rings are the theory of numerical semigroup rings, there are numerous examples of almost Gorenstein numerical semigroup rings (see \cite{BF, GMP}).

%%%%%%%%%%%%%%%%%%%%%%%%%%%%%%%%%%%%%%%%%%%%%%%%%%%%%%%%%%%%%%%%%%%%%%%%%%%%%%%%%%%%%%%%%%%%%%%%%%%%%%%%%%%%%%%%%%%%%%%%%%%%%%%%%%%%%%%%%%%%%%%%%%%%%%%%%%%%%%%%%%%%%%%%%%%%%%%%%%%%%%%%%%%%%%%%%%%%%%%%%%%%%%%%%%%%%%%%%%%%%%%%%%%%%%%%%%%%%%%%%%%%%%%%%%%%%%%%%%%%%%%%%%%%%%%%%%%%%%%%%%%%%%%%%%%%%%%%%%%%%%%%%%%%%%%%%%%%%%%%%%%%%%%%%%%%%%%%%%%%%%%%%%%%%%%%%%%%%%%%%%%%%%%%%%%%%%%%
%%%%%%%%%%%%%%%%%%%%%%%%%%%%%%%%%%%%%%%%%%%%%%%%%%%%%%%%%%%%%%%%%%%%%%%%%%%%%%%%%%%%%%%%%%%%%%%%%%%%%%%%%%%%%%%%%%%%%%%%%%%%%%%%%%%%%%%%%%%%%%%%%%%%%%%%%%%%%%%%%%%%%%%%%%%%%%%%%%%%%%%%%%%%%%%%%%%%%%%%%%%%%%%%%%%%%%%%%%%%%%%%%%%%%%%%%%%%%%%%%%%%%%%%%%%%%%%%%%%%%%%%%%%%%%%%%%%%%%%%%%%%%%%%%%%%%%%%%%%%%%%%%%%%%%%%%%%%%%%%%%%%%%%%%%%%%%%%%%%%%%%%%%%%%%%%%%%%%%%%%%%%%%%%%%%%%%%%

\section{Proof of Theorem \ref{1.1}}

This section mainly focuses our attention on proving Theorem \ref{1.1}. 
Theorem \ref{1.1} is reduced, by induction on $\dim A$, to the case where $\dim A =1$. Let us start from the key result of dimension one. First of all, we fix our notation and assumptions.

\begin{setting}\label{4.1}
Let $(R, \m)$, $(S, \n)$ be one-dimensional Cohen-Macaulay local rings with a common residue class field $k = R/\m = S/\n$, and $f : R \to k$, $g : S \to k$ be canonical surjective maps. Then $A = R\times_k S$ is a one-dimensional Cohen-Macaulay local ring with maximal ideal $J = \m \times \n$. Since $B=R\times S$ is a module-finite birational extension over $A$, $\rmQ(A) = \rmQ(R) \times \rmQ(S)$ and $\overline{A} = \overline{R} \times \overline{S}$. 

Throughout this section, unless otherwise specified, we assume that $\rmQ(A)$ is a Gorenstein ring, $A$ admits the canonical module $\rmK_A$, and the field $k=A/J$ is infinite. Hence, all the rings $A, R$, and $S$ possess fractional canonical ideals (see \cite[Corollary 2.9]{GMP}).
\end{setting}

We denote by $K$ (resp. $L$) the fractional canonical ideal of $R$ (resp. $S$). Thus, $K$ is an $R$-submodule of $\rmQ(R)$ such that $R \subseteq K \subseteq \overline{R}$, $K \cong \rmK_R$ as an $R$-module, and $L$ is an $S$-submodule of $\rmQ(S)$ such that $S \subseteq L \subseteq \overline{S}$, $L \cong \rmK_S$ as an $S$-module.

To prove Theorem \ref{1.1} in the case where $\dim A=1$, we may assume that $A$ is not a Gorenstein ring. Hence, either $R$ or $S$ is not a DVR (see Proposition \ref{2.2}).

%%%%%%%%%%%%%%%%%%%%%%%%%%%%%%%%%%%%%%%%%%%%%%%%%%%%%%%%%%%%%%%%%%%%%%%%%%%%%%%%%%%%%%%%%%%%%%%%%%%%%%%%%%%%%%%%%%%%%%%%%%%%%%%%%%%%%%%%%%%%%%%%%%%%%%%%%%%%%%%%%%%%%%%%%%%%%%%%%%%%%%%%%%%%%%%%%%%%%%%%%%%%%%%%%%%%%%%%%%%%%%%%%%%%%%%%%%%%%%%%%%%%%%%%%%%%%%%%%%%%%%%%%%%%%%

\subsection{The case where $R$ and $S$ are not DVRs}

In this subsection, suppose that both $R$ and $S$ are not DVRs. We then have $K \ne \overline{R}$ and $L \ne \overline{S}$, so that $K:\m \subseteq \overline{R}$, and $L:\n \subseteq \overline{S}$. Hence, because $R : \m \not\subseteq K$ and $S:\n \not\subseteq L$, we have 
$$
K:\m = K + R{\cdot}g_1, \ \ L:\n = L + S{\cdot}g_2
$$
for some $g_1 \in (R:\m) \setminus K$ and $g_2 \in (S:\n) \setminus L$. We set 
$$
X = (K\times L) + A{\cdot}\psi
$$
with $\psi = (g_1, g_2) \in  \overline{A} = \overline{R} \times \overline{S}$.  Then, $X$ is an $A$-submodule of $\overline{A}$, satisfying $A \subseteq B \subseteq X \subseteq \overline{A}$. Furthermore, we have the following which plays a key in our argument.

\begin{lem}\label{4.2}
$X \cong \rmK_A$ as an $A$-module. Hence, $X$ is a fractional canonical ideal of $A$ and $\rmr(A) = \rmr(R) + \rmr(S) + 1$.
\end{lem}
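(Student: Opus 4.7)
The plan is to apply $\Hom_A(-, \rmK_A)$ to the short exact sequence $0 \to A \to B \to k \to 0$ (with $B = R \times S$), producing a companion sequence $0 \to \rmK_B \to \rmK_A \to k \to 0$ that I will compare with the defining sequence $0 \to K \times L \to X \to k \to 0$. The vanishings $\Hom_A(k, \rmK_A) = 0$ (as $\depth_A \rmK_A = 1$) and $\Ext^1_A(B, \rmK_A) = 0$ (as $B$ is maximal Cohen-Macaulay over $A$), together with $\Ext^1_A(k, \rmK_A) \cong k$ by local duality, yield this sequence; the identification $\rmK_B \cong K \times L$ comes from $\rmK_{R \times S} \cong \rmK_R \oplus \rmK_S$ as $B$-modules.

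Next, I realize $\rmK_A$ as a fractional canonical ideal $\calK \subseteq \rmQ(A) = \rmQ(R) \times \rmQ(S)$, which is possible by \cite[Corollary 2.9]{GMP}. Because $R = A/(0 \times \n)$ and $S = A/(\m \times 0)$ share the dimension of $A$ and are Cohen-Macaulay, one has $\Hom_A(R, \calK) = \calK \cap (\rmQ(R) \times 0) \cong \rmK_R = K$ as $R$-modules, and symmetrically for $S$. From this I check that the embedding $K \times L \hookrightarrow \calK$ takes the form $(k,l) \mapsto (q_1 k, q_2 l)$ for units $q_1 \in \rmQ(R)^{\ast}$, $q_2 \in \rmQ(S)^{\ast}$, and $\calK = (q_1 K \times q_2 L) + A\omega$ for a preimage $\omega = (q_1 u_1, q_2 u_2)$ of $1 \in k$, with $u_1 \in K:\m$ and $u_2 \in L:\n$.

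The crux, which I expect to be the main obstacle, is to show $u_1 \notin K$ and $u_2 \notin L$. A direct computation of $\calK \cap (\rmQ(R) \times 0)$ gives $q_1 K$ when $u_2 \notin L$, but equals $q_1(K + Ru_1) = q_1(K:\m)$ when $u_2 \in L$ and $u_1 \notin K$ (using that $(K:\m)/K \cong k$ is one-dimensional). The latter would force $K:\m \cong K$ as $R$-modules, which fails for non-DVR $R$ since $\mu_R(K:\m) = \mu_R(K) + 1$. Together with the symmetric argument on the $S$-side and the observation that $u_1 \in K$ and $u_2 \in L$ simultaneously would put $\omega$ into $\iota(K \times L)$, only the case $u_1 \notin K$ and $u_2 \notin L$ survives.

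Finally, compute $\Ext^1_A(k, K \times L) \cong (K:\m)/K \oplus (L:\n)/L \cong k \oplus k$ from the long exact sequence attached to $0 \to J \to A \to k \to 0$ and the decomposition $J \cong \m \oplus \n$ of $A$-modules. The extension classes of $X$ and $\rmK_A$ correspond to $(\bar g_1, \bar g_2)$ and $(\bar u_1, \bar u_2)$, both with each component nonzero. Since $\Aut_A(K \times L) = R^{\ast} \times S^{\ast}$ acts on $\Ext^1$ through componentwise scaling by $k^{\ast} \times k^{\ast}$, which is transitive on pairs with both components nonzero, $X \cong \rmK_A$ as $A$-modules. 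The formula $\rmr(A) = \mu_A(\rmK_A) = \mu_A(X) = \mu_R(K) + \mu_S(L) + 1 = \rmr(R) + \rmr(S) + 1$ then follows at once.
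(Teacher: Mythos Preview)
Your proof is correct and takes a genuinely different route from the paper's. The paper verifies directly that $X$ satisfies the socle criterion for a canonical module: it computes $X:J = (K:\m)\times(L:\n)$ and then shows by an explicit manipulation of elements that $(X:J)/X$ is cyclic, generated by the class of $(0,g_2)$. This is short and entirely elementary.

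Your approach is homological. You obtain the exact sequence $0 \to \rmK_B \to \rmK_A \to k \to 0$ by dualizing $0\to A\to B\to k\to 0$, realize $\rmK_A$ concretely inside $\rmQ(A)$, and then identify both $\rmK_A$ and $X$ as extensions of $k$ by $K\times L$ whose classes in $\Ext^1_A(k,K\times L)\cong k\oplus k$ have both coordinates nonzero; the $\Aut_A(K\times L)=R^\ast\times S^\ast$ action identifies them. This is longer but explains \emph{why} the answer has the shape $(K\times L)+A\psi$, and the computation $\Ext^1_A(k,K\times L)\cong k^2$ makes transparent that there is essentially a one-parameter family of ``wrong'' extensions (those with a zero coordinate) in addition to the canonical one.

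One small remark on the crux step: your conclusion in the case $u_2\in L$, $u_1\notin K$ is sharper than you state. Since you already fixed $q_1$ by $\calK\cap(\rmQ(R)\times 0)=q_1K$, the direct computation gives the \emph{equality} $q_1K=q_1(K:\m)$, hence $K=K:\m$, which is immediately absurd because $(K:\m)/K\cong k$. You do not need the count $\mu_R(K:\m)=\mu_R(K)+1$; that identity is true here (it uses $R:\m=\m:\m$ for a non-DVR, whence $\m(K:\m)=\m K$), but it is a detour.
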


\begin{proof}
First we notice that $X:J = (K:\m) \times (L:\n)$. In fact, take an element $\alpha \in X:J$ and write $\alpha = (x, y)$, where $x \in \rmQ(R)$, $y \in \rmQ(S)$. Since $J\cdot(x, y) \subseteq X = (K\times L) + A{\cdot}\psi$, we have
$$
(m, 0)(x, y) = (mx, 0) \in X, \ \ \ (0, n)(x, y) = (0, ny) \in X
$$
for each $m \in \m$ and $n \in \n$. Hence, $(mx, 0) = (k, \ell) + (a, b)(g_1, g_2)$ for some $(k, \ell) \in K \times L$, $(a, b) \in A$. Therefore
$$
mx = k + ag_1, \ \ \ 0 = \ell + bg_2
$$
which yield that $b \in \n$ and $a \in \m$. Consequently, we have $ag_1 \in \m(R:\m) \subseteq R$, whence $x \in K:\m$. Similarly, $y \in L:\n$, as desired.

We now choose the regular elements $a \in \m$ on $R$ and $b \in \n$ on $S$. Then, $(a, b) \in A$ is a regular element on $A$. Therefore
$$
(a, b)X:_X J = (a, b)X:_{\rmQ(A)}J = (a, b)\left[X:_{\rmQ(A)}J\right]
$$
so that $[(a, b)X:_X J]/(a, b)X \cong \left[X:_{\rmQ(A)}J\right]/X$ as an $A$-module. We are now going to prove that $\left[X:_{\rmQ(A)}J\right]/X$ is cyclic as an $A$-module. Indeed, choose an element $\alpha \in X:J$ and  write $\alpha = (x, y)$, where $x \in \rmQ(R)$ and $y \in \rmQ(S)$. Then, since $X:J = (K+Rg_1) \times (L+Sg_2)$, we have
$$
x = k + cg_1 \ \ \ y = \ell + dg_2
$$
where $k \in K$, $\ell \in L$, $c \in R$, and $d \in S$. Notice that the projection $p_1 : A \to R, (a, b) \mapsto a$ is surjective. There exists $c'\in S$ such that $(c, c')\in A$ and $p_1(c, c') = c$. Similarly, let us choose $d' \in R$ such that $(d', d-c') \in A$ and $p_2(d', d-c')=d-c'$. Hence we have the equalities
\begin{eqnarray*}
\alpha = (x, y) & = & (k, \ell) + (cg_1, 0) + (0, dg_2) \\
       & = & (k, \ell) + (c, c')(g_1, g_2) + (0, (d-c')g_2) \\
       & = & (k, \ell) + (c, c')(g_1, g_2) + (d', d-c')(0, g_2)
\end{eqnarray*}
whence $\alpha \in (K \times L) + A{\cdot} \psi + A\cdot(0, g_2) = X +  A\cdot(0, g_2)$. Therefore, $\left[X:_{\rmQ(A)}J\right]/X$ is a cyclic $A$-module. Hence, $X \cong \rmK_A$ as an $A$-module.
\end{proof}

As a consequence of Lemma \ref{4.2}, we have the following, which ensures that Theorem \ref{1.1} holds when $R$ and $S$ are not DVRs, and $\dim A=1$.

\begin{cor}\label{4.3}
Suppose that $R$ and $S$ are not DVRs. Then the fiber product $A=R\times_kS$ is an almost Gorenstein ring if and only if $R$ and $S$ are almost Gorenstein rings.
\end{cor}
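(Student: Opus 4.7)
The plan is to apply Fact \ref{3.2} to the fractional canonical ideal $X$ of $A$ constructed in Lemma \ref{4.2}. Since $J = J\cdot A \subseteq JX$ always, the almost Gorenstein condition on $A$ reduces to checking whether $JX \subseteq A$ (equivalently, whether $JX = J$). So the task reduces to computing $JX$ and determining when it sits inside $A$.

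First I would compute $JX$ explicitly. Writing $X = (K \times L) + A\psi$ with $\psi = (g_1,g_2)$ and $J = \m \times \n$, direct multiplication yields
$$
JX \;=\; (\m K \times \n L) + (\m g_1 \times \n g_2) \;=\; \m(K:\m) \times \n(L:\n),
$$
using $K:\m = K + R g_1$ and $L:\n = L + S g_2$. The point that makes this a genuine Cartesian product (rather than merely a subset of one) is that both $(m,0)$ and $(0,n)$ lie in $J$, so one can isolate either factor. Because $A = \{(u,v) : f(u) = g(v)\text{ in }k\}$, the presence of arbitrary $(u,0)$ and $(0,v)$ in $JX$ forces $u \in \m$ and $v \in \n$. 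Hence
$$
JX \subseteq A \quad \Longleftrightarrow \quad \m(K:\m) \subseteq \m \ \text{and}\ \n(L:\n) \subseteq \n.
$$

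The main obstacle is that $\m(K:\m) = \m K + \m g_1$ has two summands: the inclusion $\m K \subseteq \m$ is precisely the almost Gorenstein condition for $R$ by Fact \ref{3.2}, but $\m g_1 \subseteq \m$ needs separate justification. To handle it I would prove a short side lemma: if $(R,\m)$ is a one-dimensional Cohen-Macaulay local ring which is not a DVR, then $R:\m \subseteq \m:\m$. The argument: if $g \in R:\m$ satisfied $g\m \not\subseteq \m$, then $g\m + \m = R$, so $gm = u$ would be a unit for some $m \in \m$; rewriting $g\m \subseteq R$ as $(u/m)\m \subseteq R$ gives $\m \subseteq mR$, forcing $\m = mR$ and making $R$ a DVR, a contradiction. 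Applied to both $R$ and $S$ (neither of which is a DVR by hypothesis), this gives $\m g_1 \subseteq \m$ and $\n g_2 \subseteq \n$ for free, so the displayed conditions collapse to $\m K \subseteq \m$ and $\n L \subseteq \n$. By Fact \ref{3.2} these are exactly the statements that $R$ and $S$ are almost Gorenstein, completing the equivalence.
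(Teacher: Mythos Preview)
Your proof is correct and follows essentially the same route as the paper: both compute $JX = \m(K:\m)\times\n(L:\n)$ and then use the fact that, for a non-DVR, $\m(R:\m)\subseteq\m$ to reduce the almost Gorenstein condition to $\m K=\m$ and $\n L=\n$. The only cosmetic difference is that the paper collapses the equality $\m(K:\m)=\m K$ in one step (citing ``$R$ and $S$ are not DVRs''), whereas you isolate the side lemma $R:\m\subseteq\m:\m$ explicitly and argue via $JX\subseteq A$; the content is identical.
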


\begin{proof}
Notice that $JX = (\m\times\n)((K\times L) + A{\cdot}\psi) = \m(K+Rg_1) \times \n (L+Sg_2) = \m(K:\m) \times \n(L:\n) = \m K \times \n L$, where the last equality follows from the fact that $R$ and $S$ are not DVRs. Therefore, $A=R\times_kS$ is an almost Gorenstein ring if and only if $JX=J$ (Fact \ref{3.2}). The latter condition is equivalent to saying that $\m K = \m$ and $\n L = \n$, that is, $R$ and $S$ are almost Gorenstein rings, as desired.
\end{proof}

Let us note one example.

\begin{ex}\label{4.4}
Let $k$ be a field, $a, b, c, d \ge 2$ be integers. We set
$R=k[[X, Y]]/(X^a-Y^b)$, $S=k[[Z, W]]/(Z^c-W^d)$ and consider the canonical surjections $f:R \to k$, $g:S \to k$. Then
$$
R\times_kS \cong k[[X, Y, Z, W]]/{(X, Y)(Z, W) + (X^a-Y^b, Z^c-W^d)}
$$
is an almost Gorenstein local ring with $\rmr(R)=3$.
\end{ex}

%%%%%%%%%%%%%%%%%%%%%%%%%%%%%%%%%%%   GGL
%%%%%%%%%%%%%%%%%%%%%%%%%%%%%%%%%%%%%%%%%%%%%%

For a one-dimensional case, we will show that the conditions stated in Theorem \ref{1.1} is equivalent to saying that the fiber product $A=R\times_k S$ is a generalized Gorenstein ring, which naturally covers the class of almost Gorenstein rings. Similarly, for almost Gorenstein rings, the notion is defined by a certain specific embedding of the rings into their canonical modules. Let us now recall the definition of generalized Gorenstein rings, in particular, of dimension one, which is recently proposed by the second author and S. Kumashiro (see \cite{GK} for the precise definition).

\begin{defn}[{\cite[Definition 1.2]{GK}}]\label{4.5}
Let $(R, \m)$ be a Cohen-Macaulay local ring with $\dim R=1$, possessing the fractional canonical ideal $K$, that is, $K$ is an $R$-submodule of $\overline{R}$ such that $R \subseteq K \subseteq \overline{R}$ and $K \cong \rmK_R$ as an $R$-module. We say that $R$ is {\it a generalized Gorenstein ring}, if either $R$ is Gorenstein, or $R$ is not a Gorenstein ring and $K/R$ is a free $R/\fka$-module, where $\fka = R:R[K]$. 
\end{defn}
\noindent
Notice that, by \cite[Theorem 3.11]{GMP}, $R$ is a non-Gorenstein almost Gorenstein ring if and only if $\fka = \m$, so that the above definition gives a wider class of almost Gorenstein rings. 

\medskip
%%%%%%%%%%%%%%%%%%%%%%%%%%%%%%%%%%%%%%%%%%%%%%%%%%%%%%%%%%%%%%%%%%%%%

We begin with the following.

\begin{prop}\label{4.6}
$\ell_A(X/A) = \ell_R(K/R) + \ell_S(L/S) + 2$.
\end{prop}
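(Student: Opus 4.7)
My plan is to filter $X$ by the chain $A \subseteq B \subseteq K \times L \subseteq X$ with $B = R \times S$, compute the length of each successive quotient, and add them up.

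For the bottom step, the defining short exact sequence $0 \to A \to B \to k \to 0$ of the fiber product (which is exact on the right because both $f$ and $g$ are surjective) immediately yields $\ell_A(B/A) = 1$.

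For the middle step, $(K \times L)/B \cong K/R \oplus L/S$ as $A$-modules. Here I would observe that the $A$-action on $K/R$ factors through the surjective projection $p_1 \colon A \to R$, so the $A$-submodules of $K/R$ coincide with its $R$-submodules; the same holds on the $S$-side via $p_2$. Hence $\ell_A((K\times L)/B) = \ell_R(K/R) + \ell_S(L/S)$.

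For the top step, I would use that $X/(K\times L)$ is cyclic, generated by the image of $\psi = (g_1, g_2)$. Since $g_1 \in R\colon \m$ and $g_2 \in S\colon \n$, one has $J\psi = (\m g_1) \times (\n g_2) \subseteq R \times S \subseteq K \times L$, so the quotient is annihilated by $J$ and is therefore a $k$-vector space. It is nonzero because $\psi \notin K \times L$ (the first coordinate $g_1$ lies outside $K$ by construction), and it is cyclic, so it is exactly one-dimensional over $k$. Summing the three contributions produces $\ell_A(X/A) = \ell_R(K/R) + \ell_S(L/S) + 2$. The only point that requires any thought is the identification of $A$-length with $R$-length (resp.\ $S$-length) on $K/R$ (resp.\ $L/S$), but this is automatic from the surjectivity of the projections $p_1$ and $p_2$; everything else is bookkeeping with the filtration.
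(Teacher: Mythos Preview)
Your proof is correct, and it takes a genuinely different route from the paper's. You filter by $A \subseteq B \subseteq K\times L \subseteq X$ and read off the three successive quotients as $k$, $K/R \oplus L/S$, and $k$. The paper instead filters via $A \subseteq JX + A \subseteq X$: it first observes $J = JX \cap A$ (from $JX \subseteq J\overline{A}$), so that $\ell_A((JX+A)/A) = \ell_A(JX/J) = \ell_R(\m K/\m) + \ell_S(\n L/\n)$, and then identifies $\ell_A(X/(JX+A))$ with $\mu_A(X/A) = \rmr(R) + \rmr(S)$, invoking the type computation $\rmr(A)=\rmr(R)+\rmr(S)+1$ from Lemma~\ref{4.2}. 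The paper then needs the auxiliary identity $\ell_R(\m K/\m) = \ell_R(K/R) + 1 - \rmr(R)$ (and its analogue for $S$) to finish. Your argument avoids both the type formula and this identity; the paper's approach, on the other hand, makes visible the contribution of $\mu_A(X/A)$ and the equality $JX = \m K \times \n L$, which it reuses elsewhere (e.g.\ in Corollary~\ref{4.3} and~\ref{4.9}). So your version is cleaner for this proposition in isolation, while the paper's fits more naturally with the surrounding machinery.
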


\begin{proof}
Remember that $X = (K\times L) + A{\cdot}\psi$ and $JX = J(K \times L)$. Since $JX \subseteq J \overline{A}$, we then have $J=JX \cap A$. Hence, we get the equalities
\begin{eqnarray*}
\ell_A(X/ A) & = & \ell_A(X/ JX + A) + \ell_A(JX+A/A) \\
             & = & \mu_A(X/A) + \ell_A(JX/J) \\
             & = & \rmr(R) + \rmr(S) + \ell_R(\m K/\m) + \ell_S(\n L/\n) \\
             & = & \rmr(R) + \rmr(S) + [\ell_R(K/R) + 1 -\rmr(R)] + [\ell_S(L/S) + 1 -\rmr(S)] \\
             & = & \ell_R(K/R) + \ell_S(L/S) +2
\end{eqnarray*}
as claimed.
\end{proof}

%We need some lemmata.

\begin{lem}\label{4.7}
Let $(R, \m)$ be a Cohen-Macaulay local ring with $\dim R=1$, possessing the fractional canonical ideal $K$. If $R$ is not a Gorenstein ring, then $\fka {\cdot}(K:\m) = \fka K$, where $\fka = R:K$.
\end{lem}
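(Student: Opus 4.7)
The inclusion $\fka K \subseteq \fka(K:\m)$ is immediate from $K \subseteq K:\m$, so the content of the lemma is the reverse inclusion. My first move would be to reduce to a single element by exploiting the one-dimensionality of $(K:\m)/K$. Applying $\Hom_R(-, K)$ to $0 \to \m \to R \to R/\m \to 0$ and using $\Hom_R(R/\m, K) = 0$ (which holds because $\depth K = 1$), one obtains $(K:\m)/K \cong \Ext^1_R(R/\m, K) \cong R/\m$ by local duality. Therefore $K:\m = K + Rg$ for any $g \in (K:\m) \setminus K$, whence $\fka(K:\m) = \fka K + \fka g$, and it suffices to show $\fka g \subseteq \fka K$.

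Next, since $R$ is not Gorenstein, $\fka = R:K \subsetneq R$, so $\fka \subseteq \m$. For any $a \in \fka$ one has $ag \in \m \cdot (K:\m) \subseteq K$, and moreover $\m \cdot ag = a(\m g) \subseteq aK \subseteq \fka K$, where the last step uses $a \in \fka$. Consequently the class $\overline{ag} \in K/\fka K$ is annihilated by $\m$, that is $\overline{ag} \in \Soc(K/\fka K)$, and the remaining task is to force $\overline{ag} = 0$ for every $a \in \fka$.

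To carry out the final vanishing, I would exploit the clean identity $(\fka K):K = \fka$, which is transparent: $z \in (\fka K):K$ forces $zK \subseteq \fka K \subseteq R$, whence $z \in R:K = \fka$. The natural reformulation of the desired inclusion is then $K:\m \subseteq (\fka K):\fka$; to establish it one would use the canonical pairing $K \otimes_R \Hom_R(K, R) \to R$ whose image is the trace ideal $\fka K$, combined with the one-dimensionality of $(K:\m)/K$, to show that the induced $k$-linear map $\fka/\m\fka \to K/\fka K$ sending $\bar a$ to $\overline{ag}$ must vanish. This is precisely the main obstacle: knowing $\overline{ag}$ lies in $\Soc(K/\fka K)$ is not enough, since that socle is typically nonzero. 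The argument must invoke a finer structural identity extending the Frobenius-type degree estimate visible in numerical-semigroup rings (where $g$ can be chosen as a monomial whose degree exceeds the Frobenius number of $\fka K$, making $\fka g \subseteq \fka K$ immediate by counting exponents) to an arbitrary one-dimensional Cohen-Macaulay ring with canonical module.
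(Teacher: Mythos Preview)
Your reduction to a single generator $g$ with $K:\m = K + Rg$ is exactly how the paper begins, but you then miss the one-line finish. The decisive move is not to take $g$ arbitrarily in $(K:\m)\setminus K$ but to choose it specifically in $(R:\m)\setminus K$; this is legitimate because $R:\m \not\subseteq K$ whenever $R$ is not Gorenstein (one way to see it: $K:(R:\m)=\m K$ by a colength comparison, while $1\notin \m K$). With this choice, for $a\in\fka$ and $k\in K$ one has $(ag)k=(ak)g$; since $ak\in\fka K\subseteq\m$ (the trace ideal $(R:K)K$ is a proper ideal of $R$ as $R$ is not Gorenstein) and $g\in R:\m$, it follows that $(ak)g\in\m g\subseteq R$. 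Hence $ag\in R:K=\fka\subseteq\fka K$, the last inclusion because $1\in K$. That is the entire argument.

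Your attempted route through $\Soc(K/\fka K)$ is a genuine gap, and you already diagnose why: landing in the socle of $K/\fka K$ gives no vanishing, since that socle is nonzero. The two proposed rescues---a pairing $\fka/\m\fka\to K/\fka K$ forced to be zero, and an abstract analogue of the numerical-semigroup Frobenius bound---are not arguments; nothing you wrote constrains the image of that pairing, and the semigroup heuristic has no evident incarnation in a general one-dimensional Cohen--Macaulay local ring. The whole difficulty evaporates once $g$ is taken from $R:\m$ rather than merely from $K:\m$.
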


\begin{proof}
Since $R$ is not a DVR, $K:\m = K + Rg$ for some $g \in (R:\m) \setminus K$. For each $a \in \fka$ and $k \in K$, we obtain $(ag)k = (ak)g \in \m g \subseteq R$, because $ak \in \fka K \subseteq \m K \subseteq \m \overline{R}$. Hence, $ag \in R:K = \fka \subseteq \fka K$, so that $\fka (K:\m) = \fka K + \fka g = \fka K$ which completes the proof.
\end{proof}

By setting $\fka_1 = R:K$ and $\fka_2 = S:L$, we have the following.

\begin{lem}\label{4.8}
The following assertions hold true.
\begin{enumerate}[$(1)$]
\item Suppose that $R$ and $S$ are not Gorenstein rings. Then $A:X = \fka_1 \times \fka_2$.
\item Suppose that $R$ is Gorenstein, but $S$ is not a Gorenstein ring. Then $A:X = \m \times \fka_2$.
\end{enumerate}
\end{lem}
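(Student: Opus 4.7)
The plan is to prove each equality by an element-chasing argument: given $\alpha \in A:X$, the containment $A \subseteq X$ automatically yields $\alpha \in A$, so we may write $\alpha = (x,y)$ with $x \in R$, $y \in S$, and $f(x) = g(y)$. The conditions on $(x,y)$ are then read off from $\alpha (K \times L) \subseteq A$ and $\alpha \psi \in A$; the reverse containments reduce to checking these same conditions on candidate pairs from the claimed right-hand sides.

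Before the two cases, I would record two standing facts that do the real work. First, since $R$ and $S$ are not DVRs, $\m(R:\m) \subseteq \m$ and $\n(S:\n) \subseteq \n$: if $\m(R:\m) = R$, then $\m$ would be an invertible, hence principal, ideal in a one-dimensional local ring, contradicting the non-DVR hypothesis. Second, when $R$ is non-Gorenstein, the trace ideal $\fka_1 K = (R:K)K$ is a proper ideal of $R$, so $\fka_1 K \subseteq \m$; combining this with Lemma~\ref{4.7} gives $\fka_1 g_1 \subseteq \fka_1(K:\m) = \fka_1 K \subseteq \m$. Analogous statements hold on the $S$ side.

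For part~(1), testing $\alpha = (x,y)$ against the elements $(k, 0)$ and $(0, \ell)$ of $K \times L$ forces $xK \subseteq \m$ and $yL \subseteq \n$, so $(x,y) \in \fka_1 \times \fka_2$. Conversely, a pair $(a,b) \in \fka_1 \times \fka_2$ lies in $\m \times \n \subseteq A$ by the standing facts, $(a,b)(K \times L) \subseteq \fka_1 K \times \fka_2 L \subseteq \m \times \n$, and $(a,b)\psi = (ag_1, bg_2) \in \fka_1 K \times \fka_2 L \subseteq A$ via the Lemma~\ref{4.7} calculation. Part~(2) runs along the same lines with $K = R$ and $\fka_1 = R$: the test now yields $x \in \m$ and $y \in \fka_2$ directly, and verifying $(ag_1, bg_2) \in A$ for $(a,b) \in \m \times \fka_2$ uses $\m g_1 \subseteq \m(R:\m) \subseteq \m$ (the non-DVR fact) in place of Lemma~\ref{4.7}.

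I expect the step most in need of care is the claim $\fka_1 K \subseteq \m$ when $R$ is non-Gorenstein; it is precisely where non-Gorensteinness enters, and although it is a standard consequence of the trace-ideal characterization of Gorensteinness (if $\fka_1 K = R$, then $K$ is an invertible module isomorphic to $\rmK_R$, so $\rmK_R$ is free and $R$ is Gorenstein), recording this cleanly within the paper's established notation is the only delicate choice.
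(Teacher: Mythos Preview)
Your proof is correct and follows essentially the same approach as the paper: both directions proceed by element-chasing, testing against $(k,0)$ and $(0,\ell)$ for one inclusion and invoking Lemma~\ref{4.7} (together with the non-Gorenstein fact $\fka_i \subseteq \m$, respectively the non-DVR fact $\m(R:\m)\subseteq \m$) for the reverse. The only cosmetic difference is that the paper deduces $\fka_1 K \subseteq \m$ via $\fka_1 K \subseteq \m\overline{R}\cap R = \m$ rather than via the invertibility argument you sketch.
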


\begin{proof}
$(1)$ By Lemma \ref{4.7}, we get $\fka_1 (K:\m) = \fka_1 K$ and $\fka_2 (L:\n) = \fka_2 L$. Hence 
\begin{eqnarray*}
X\cdot(\fka_1 \times \fka_2) 
&=& (\fka_1 K + \fka_1 g) \times (\fka_2 L + \fka_2 g_2) = \fka_1 (K:\m) \times \fka_2 (L:\n) = \fka_1 K \times  \fka_2 L \\
&\subseteq&  (\m \overline{R} \cap R) \times (\n \overline{S}\cap S) = \m \times \n \subseteq A
\end{eqnarray*}
whence $\fka_1 \times \fka_2 \subseteq A:X$. Conversely, for every $(x, y) \in A:X$, we have $(K \times L)\cdot(x, y)  \subseteq A$. For each $k \in K$, we get $(k, 0)(x, y) = (kx, 0) \in A$, so that $kx \in \m$. Hence $x \in R:K=\fka_1$. Similarly, $y \in S:L=\fka_2$, as wanted.

$(2)$ Follow from the same argument as in the proof of $(1)$.
\end{proof}

As a consequence, we have the following.

\begin{cor}\label{4.9}
Suppose that $R$ and $S$ are not DVRs. Then the fiber product $A=R\times_kS$ is a generalized Gorenstein ring if and only if $R$ and $S$ are almost Gorenstein rings.
\end{cor}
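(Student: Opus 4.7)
The plan is to prove the easy direction via Corollary \ref{4.3} plus the observation that almost Gorenstein implies generalized Gorenstein, and the hard direction via a length/freeness argument building on Lemma \ref{4.2}, Lemma \ref{4.8}, and Proposition \ref{4.6}.

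For $(\Leftarrow)$: if $R$ and $S$ are almost Gorenstein, Corollary \ref{4.3} gives that $A$ is almost Gorenstein. Every one-dimensional almost Gorenstein local ring is generalized Gorenstein---in the non-Gorenstein case $\fka := R:R[K] = \fkm$ by \cite[Theorem 3.11]{GMP}, and $K/R$ is then annihilated by $\fkm$ (since $\fkm K = \fkm \subseteq R$), hence is a free $R/\fka = k$-module; the Gorenstein case is trivial. Thus $A$ is generalized Gorenstein.

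For $(\Rightarrow)$: assume $A$ is generalized Gorenstein. Proposition \ref{2.2}(3) forces $A$ to be non-Gorenstein, so $X/A \cong (A/\fkc)^{\oplus n}$ as $A$-modules, where $\fkc = A:A[X]$. A free $A/\fkc$-module has annihilator $\fkc$, so we deduce $\fkc = \mathrm{ann}_A(X/A) = A:X$. Extending Lemma \ref{4.8} to the remaining configurations---the case $R$ non-Gorenstein, $S$ Gorenstein is symmetric to (2); when both $R$ and $S$ are Gorenstein a direct verification gives $A:X = \fkm \times \fkn = J$---we can write uniformly $A:X = \fkb_1 \times \fkb_2$, where $\fkb_1 = \fkm$ if $R$ is Gorenstein and $\fkb_1 = R:K$ otherwise, and analogously for $\fkb_2$. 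Using $B/A \cong k$ and additivity of length, $\ell_A(A/\fkc) = p + q - 1$ with $p = \ell_R(R/\fkb_1)$ and $q = \ell_S(S/\fkb_2)$.

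The crux is a comparison of two computations of $\ell_A(X/A)$. By Lemma \ref{4.2}, $\mu_A(X) = \rmr(R) + \rmr(S) + 1$; subtracting the contribution of $1 \in A$ gives $\mu_A(X/A) = r + s$ with $r = \rmr(R)$, $s = \rmr(S)$. Freeness combined with Proposition \ref{4.6} yields
\[
(r+s)(p+q-1) = \ell_A(X/A) = \ell_R(K/R) + \ell_S(L/S) + 2.
\]
On the other hand, $K/R$ is an $R/\fkb_1$-module minimally generated by $r-1$ elements (the identity $R \cap \fkm K = \fkm$ comes from $\fkm\overline{R} \subsetneq \overline{R}$), so $\ell_R(K/R) \leq (r-1)p$, and analogously $\ell_S(L/S) \leq (s-1)q$. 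Substituting, the inequality collapses via the identity $(r+s)(p+q-1) - (r-1)p - (s-1)q - 2 = (s+1)(p-1) + (r+1)(q-1)$ to $(s+1)(p-1) + (r+1)(q-1) \leq 0$, which forces $p = q = 1$ since $p, q, r, s \geq 1$. Hence $\fkb_1 = \fkm$ and $\fkb_2 = \fkn$, so in the non-Gorenstein case $R:K = \fkm$, i.e.\ $\fkm K = \fkm$, so $R$ is almost Gorenstein by Fact \ref{3.2}; the Gorenstein case is automatic, and $S$ is handled identically. The main obstacle is the uniform bookkeeping across the four Gorenstein/non-Gorenstein configurations of $R$ and $S$---including the needed extensions of Lemma \ref{4.8}---and verifying the algebraic identity that collapses the length inequality.
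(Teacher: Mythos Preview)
Your proof is correct and follows essentially the same route as the paper's: both combine Proposition~\ref{4.6}, Lemma~\ref{4.8}, and the bound $\ell_R(K/R)\le (r-1)\ell_R(R/\fkb_1)$ to collapse the length equality coming from freeness of $X/A$ into the inequality $(s+1)(p-1)+(r+1)(q-1)\le 0$, forcing $p=q=1$. The only difference is cosmetic---you package all four Gorenstein/non-Gorenstein configurations uniformly via $\fkb_1,\fkb_2$ and explicitly deduce $A:A[X]=A:X$ from the annihilator of the free module, whereas the paper treats the two nontrivial configurations separately and uses $A:X$ directly.
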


\begin{proof}
The `if' part is due to Corollary \ref{4.3}. Let us make sure of the `only if' part.
Suppose that $A=R\times_kS$ is a generalized Gorenstein ring. We may assume that either $R$ or $S$ is not a Gorenstein ring. Notice that there is an isomorphism $(A/A:X)^{\oplus(r+s)} \cong X/A$ of $A$-modules, where $r=\rmr(R)$, $s = \rmr(S)$ denote the Cohen-Macaulay types of $R$ and $S$, respectively. Hence, by Proposition \ref{4.6}, 
$
(r+s) \cdot \ell_A(A/A:X) = \ell_R(K/R) + \ell_S(L/S) + 2.
$
Suppose now that both $R$ and $S$ are not Gorenstein rings. Then, because $\ell_A(A/A:X) = \ell_R(R/\fka_1) + \ell_S(S/\fka_2) - 1$, we have
\begin{eqnarray*}
(r+s) \cdot \left[\ell_R(R/\fka_1) + \ell_S(S/\fka_2) - 1\right] 
& = & \ell_R(K/R) + \ell_S(L/S) + 2 \\
& \le & (r-1)\ell_R(R/\fka_1) + (s-1) \ell_S(S/\fka_2) + 2
\end{eqnarray*}
which yield that
$
\left[\ell_S(S/\fka_2)-1\right](r+1) + \left[\ell_R(R/\fka_1) -1\right](s+1) \le 0.
$
Hence $\ell_R(R/\fka_1) = \ell_S(S/\fka_2) = 1$, so that $\fka_1 = \m$, $\fka_2 = \n$. By Fact \ref{3.2}, we conclude that $R$ and $S$ are almost Gorenstein rings. On the other hand, we consider the case where $R$ is Gorenstein, but $S$ is not a Gorenstein ring. We then have
$
(r+s)\cdot \left[\ell_S(S/\fka_2)\right] \le (s-1)\cdot\ell_S(S/\fka_2) + 2
$
which implies $\ell_S(S/\fka_2) = 1$. Hence $S$ is an almost Gorenstein ring.
\end{proof}

%%%%%%%%%%%%%%%%%%%%%%%%%%%%%%%%%%%%%%%%%%%%%%%%%%%%%%%%%%%%%%%%%%%%%%%%%%%%%%%%%%%%%%%%%%%%%%%%%%%%%%%%%%%%%%%%%%%%%%%%%%%%%%%%%%%%%%%%%%%%%%%%%%%%%%%%%%%%%%%%%%%%%%%%%%%%%%%%%%%%%%%%%%%%%%%%%%%%%%%%%%%%%%%%%%%%%%%%%%%%%%%%%%%%%%%%%%
%%%%%%%%%%%%%%%%%%%%%%%%%%%%%%%%%%%%%%%%%%%%%%%%%%%%%%%%%%%%%%%%%%%%%%%%%%%%%%%%%%%%%%%%%%%%%%%%%%%%%%%%%%%%%%%%%%%%%%%%%%%%%%%%%%%%%%%%%%%%%%%%%%%%%%%%%%%%%%%%%%%%%%%%%%%%%%%%%%%%%%%%%%%%%%%%%%%%%%%%%%%%%%%%%%%%%%%%%%%%%%%%%%%%%%%%%%%%%%%%%%%%%%%%%%%%%%%%%%%%%%%%%%%%%%%%%%%%%%%%%%%%%%%%%%%%%%%%%%%%%%%%%%%%%%%%%%%%%%%%%%%%%%%%%%%%%%%%%%%%%%

\subsection{The case where $R$ is a DVR and $S$ is not a DVR}

In this subsection, we assume that $R$ is a DVR and $S$ is not a DVR. 
Choose an $A$-submodule $X$ of $\rmQ(A)$ such that $A \subseteq X \subseteq \overline{A}$ and $X \cong \rmK_A$ as an $A$-module. Then, because $\rmK_B = X :B \cong R \times L$ as a $B$-module, we have $X:B = \xi\cdot (R \times L)$ for some invertible element $\xi =(\xi_1, \xi_2)\in \rmQ(A)$.  
Applying the functor $\Hom_A(-, X)$ to the exact sequence $0 \to A \overset{\iota}{\to} B \overset{\varphi}{\to} k= A/J \to 0$ where $\varphi = \left[\begin{smallmatrix}f \\ -g\end{smallmatrix}\right]$, we obtain the sequence 
$$
0 \to X:B \to X \to A/J \to 0
$$
of $A$-modules. Hence $JX \subseteq X:B \subseteq X$. 
Thus we have the following inclusions.

\begin{lem}\label{4.10}
One has
\begin{eqnarray*}
X:B ~\subseteq ~X & \subseteq& (X:B):J = (\xi_1R \times \xi_2L):J \\
&=& \xi_1(R:\m) \times \xi_2 (L:\n).
\end{eqnarray*}
In particular, $J(X:B) \subseteq JX \subseteq J\cdot[\xi_1(R:\m) \times \xi_2(L:\n)]$.
\end{lem}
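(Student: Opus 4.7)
The plan is to verify the chain inclusion-by-inclusion, since the whole statement is essentially formal once one unwinds the definitions. The inclusion $X:B\subseteq X$ is free: $1\in B$ forces $y=y\cdot 1\in yB\subseteq X$ for every $y\in X:B$. The middle inclusion $X\subseteq (X:B):J$ amounts to $JX\subseteq X:B$, which I would read off directly from the exact sequence $0\to X:B\to X\to A/J\to 0$ displayed just before the lemma: the cokernel $X/(X:B)\cong A/J$ is killed by $J$.

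The only nontrivial step is the colon computation $(\xi_1 R\times\xi_2 L):J=\xi_1(R:\m)\times\xi_2(L:\n)$. I would view this inside $\rmQ(A)=\rmQ(R)\times\rmQ(S)$ and proceed coordinatewise: for $(a_1,a_2)\in\rmQ(R)\times\rmQ(S)$, the condition $(a_1,a_2)\cdot J\subseteq \xi_1 R\times\xi_2 L$ means $(a_1 m,\,a_2 n)\in\xi_1 R\times\xi_2 L$ for all $(m,n)\in\m\times\n$. Testing against the elements $(m,0)$ and $(0,n)$ separately decouples the two components and yields $a_1\m\subseteq\xi_1 R$ and $a_2\n\subseteq\xi_2 L$. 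Since $\xi_1,\xi_2$ are units in the respective total rings of fractions, these conditions are the same as $a_1\in\xi_1(R:\m)$ and $a_2\in\xi_2(L:\n)$; conversely any such pair clearly works.

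The equality $(X:B):J=(\xi_1 R\times\xi_2 L):J$ is just the substitution $X:B=\xi(R\times L)=\xi_1 R\times\xi_2 L$, and the final ``in particular'' assertion is obtained by multiplying the established chain through by $J$. I do not foresee a real obstacle: the only point to watch is book-keeping of ambient rings in the various colons, since $X:B$ is a $B$-submodule of $\rmQ(A)$ while $R:\m$ and $L:\n$ naturally live in $\rmQ(R)$ and $\rmQ(S)$ respectively, so one must consistently interpret every colon inside $\rmQ(A)$ for the coordinate-by-coordinate reduction to work cleanly.
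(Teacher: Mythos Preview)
Your proposal is correct and matches the paper's treatment: the paper states Lemma~4.10 without proof, taking the inclusions as immediate from the exact sequence $0\to X:B\to X\to A/J\to 0$ and the identification $X:B=\xi(R\times L)$; your write-up simply spells out these steps, including the coordinatewise colon computation inside $\rmQ(A)=\rmQ(R)\times\rmQ(S)$.
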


We are now in a position to prove Theorem \ref{1.1} for one-dimensional case.

\begin{prop}\label{4.11}
Suppose that $R$ is a DVR and $S$ is not a DVR. Then the fiber product $A=R\times_kS$ is an almost Gorenstein ring if and only if $S$ is an almost Gorenstein ring.
\end{prop}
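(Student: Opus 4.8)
The plan is to apply the one‑dimensional criterion of Fact~\ref{3.2}: since $X$ is a fractional canonical ideal of $A$ and $\dim A=1$, the ring $A$ is almost Gorenstein if and only if $JX=J$. So I would compute $JX$ from the inclusions of Lemma~\ref{4.10} and determine exactly when it collapses onto $J$.

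First I would sandwich $JX$ between $J(X\colon B)$ and $J[(X\colon B)\colon J]$; this is legitimate because $J(X\colon B)\subseteq JX\subseteq X\colon B$ and $X\subseteq (X\colon B)\colon J$. Multiplying the descriptions $X\colon B=\xi_1R\times\xi_2L$ and $(X\colon B)\colon J=\xi_1(R\colon\m)\times\xi_2(L\colon\n)$ by $J=\m\times\n$, and using the two special features here — $\m(R\colon\m)=R$ because $R$ is a DVR, and $\n(L\colon\n)=\n L$ because $S$ is not a DVR (as in the proof of Corollary~\ref{4.3}) — both the lower and the upper bound acquire the \emph{same} second component $\xi_2\n L$, so that $p_2(JX)=\xi_2\n L$. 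For the first component, $JX\subseteq J\overline{A}=\m\times\n\overline{S}$ forces $p_1(JX)\subseteq\m$, while $J=JA\subseteq JX$ forces $p_1(JX)\supseteq\m$, hence $p_1(JX)=\m$. Since every element of $JX\subseteq J\overline{A}$ has both components of zero residue, the fibre condition is automatic; thus $JX\subseteq A$ (equivalently $JX=J$) holds iff $\xi_2\n L\subseteq S$, which then forces $\xi_2\n L=\n$. The upshot is that $A$ is almost Gorenstein if and only if $\xi_2\n L=\n$.

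The heart of the matter is then to compare the twisted ideal $\xi_2\n L$ with the honest $\n L$, i.e.\ to control the unit $\xi_2$; the key claim, which I expect to be the \textbf{main obstacle}, is that $\xi_2$ is a unit of $\overline{S}$. I would prove this from the presentation $X=(X\colon B)+A\theta$ afforded by the exact sequence $0\to X\colon B\to X\to A/J\to 0$ (take any $\theta\in X\setminus(X\colon B)$), together with $(1,1)\in A\subseteq X$. Writing $(1,1)=\chi+(a,b)\theta$ with $\chi\in X\colon B$ and $(a,b)\in A$, and reading off second components, gives $1=\chi_2+b\theta_2$. Here $\chi_2\in\xi_2L$ and $\theta_2\in\xi_2(L\colon\n)$, and since $S$ is not a DVR we have $L,\,L\colon\n\subseteq\overline{S}$, so $\chi_2$ and $b\theta_2$ both lie in $\xi_2\overline{S}$; hence $1\in\xi_2\overline{S}$, i.e.\ $\xi_2^{-1}\in\overline{S}$. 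As $\xi_2=\xi_2\cdot 1\in\xi_2L\subseteq\overline{S}$ as well, $\xi_2$ is a unit of $\overline{S}$. This is precisely the step that consumes both hypotheses: that $R$ is a DVR (giving $\rmK_R=R$ and the sandwich above) and that $S$ is not a DVR (giving $L\colon\n\subseteq\overline{S}$).

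Finally I would conclude by a colength comparison. Both $\xi_2\n L$ (because $p_2(JX)\supseteq p_2(J)=\n$) and $\n L$ (because $\n=\n S\subseteq\n L$) contain $\n$, and since $\xi_2$ is a unit of $\overline{S}$ we get $\ell_S(\overline{S}/\xi_2\n L)=\ell_S(\overline{S}/\n L)$. Therefore
$$
\xi_2\n L=\n\iff \ell_S(\overline{S}/\xi_2\n L)=\ell_S(\overline{S}/\n)\iff \ell_S(\overline{S}/\n L)=\ell_S(\overline{S}/\n)\iff \n L=\n,
$$
and by Fact~\ref{3.2} the last condition is exactly that $S$ is almost Gorenstein. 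Combined with the reduction of the second paragraph, this yields that $A=R\times_kS$ is almost Gorenstein if and only if $S$ is almost Gorenstein, as required.
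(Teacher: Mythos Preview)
Your argument is essentially correct and takes a genuinely different route from the paper's, particularly in the backward direction. The paper treats the two implications separately: for $A$ almost Gorenstein $\Rightarrow S$ almost Gorenstein it squeezes $\n$ between $\xi_2\n L$ and $\xi_2\n(L:\n)=\xi_2\n L$ and then invokes the \emph{isomorphism} criterion $\n L\cong\n$ of Fact~\ref{3.2}(3); for the converse it does not attempt to prove $JX=J$ but rather $JX\cong J$, via a somewhat delicate case split on whether $JX\cap A(1,0)\xi\subseteq J\xi$. Your approach is more symmetric: you reduce both directions simultaneously to the equality $\xi_2\n L=\n$, then prove once and for all that $\xi_2$ is a unit of $\overline{S}$ (using the presentation $X=(X:B)+A\theta$), and finish with a length comparison. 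This avoids the paper's case analysis entirely; what it costs is the extra lemma that $\xi_2\in\overline{S}^\times$, which the paper only establishes later (in the sharper form $\xi_2^{-1}\in L:\n$, Proposition~\ref{4.12}(2)).

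One technical point deserves care. Your final chain of equivalences goes through $\ell_S(\overline{S}/\xi_2\n L)=\ell_S(\overline{S}/\n L)$, but Setting~\ref{4.1} does not assume $\overline{S}$ is module-finite over $S$, so $\ell_S(\overline{S}/\n)$ may be infinite and the comparison becomes vacuous. The fix is straightforward: replace $\overline{S}$ by the finite $S$-module $N=\n L\cdot S[\xi_2,\xi_2^{-1}]$. Since $\xi_2$ and $\xi_2^{-1}$ are integral over $S$, $N$ is finitely generated, satisfies $\xi_2N=N$, and contains both $\n L$ and $\xi_2\n L$ (hence $\n$). Then $\ell_S(N/\n L)=\ell_S(\xi_2N/\xi_2\n L)=\ell_S(N/\xi_2\n L)$, so $\ell_S(\n L/\n)=\ell_S(\xi_2\n L/\n)$, and your equivalence $\xi_2\n L=\n\iff\n L=\n$ follows. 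With this adjustment the proof is complete.
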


\begin{proof}
Suppose that $A$ is an almost Gorenstein ring. Then, since $JX=J$, we get 
$$
\n{\cdot}\xi_2L \subseteq \n \subseteq  \n{\cdot}\xi_2(L:\n) = \xi_2{\cdot}\n L
$$
because $\n (L:\n) = \n L$.
Thus $\n=\xi_2{\cdot}\n L \cong \n L$, and hence $S$ is an almost Gorenstein ring. Conversely, suppose that $S$ is an almost Gorenstein ring. We then have $\xi \in X$, because $\xi = \xi\cdot(1, 1) \in \xi\cdot(K\times L) = X:B \subseteq X$. Since $X \subseteq \overline{A} = R \times \overline{S}$, we get $\xi_1 \in R$. Therefore
\begin{eqnarray*}
JX &\subseteq& J\cdot \left[\xi_1(R:\m) \times \xi_2(L:\n) \right]\\
   &=& \xi_1\cdot\m(R:\m) \times \xi_2\cdot \n \\
   &\subseteq& \xi_1 R \times \xi_2\cdot \n
\end{eqnarray*}
where the second equality comes from the fact that $S$ is an almost Gorenstein ring, but not a DVR. Moreover, we have 
$$
JX \subseteq A\cdot(1, 0)\cdot \xi + J \xi.
$$
Indeed, for each $a \in R$, there exists $\alpha \in A$ such that $p_1 (\alpha) = a$. Hence $\alpha = (a, b)$ for some $b \in S$. Then, for every $n \in \n$, we get the equalities
\begin{eqnarray*}
(a\xi_1, \xi_2n) &=& (a, b)(\xi_1, 0) + (0, \xi_2 n) \\
                 &=& (a, b)(1, 0)\xi + (0, n) \xi
\end{eqnarray*}
which imply the required inclusion. 
Therefore, we have 
$$
JX = [JX \cap A\cdot(1, 0)\cdot \xi] + J \xi.
$$
If $JX \cap A\cdot(1, 0)\cdot \xi \subseteq J\xi$, then $JX = J\xi \cong J$. 
Let us now assume that $JX \cap A\cdot(1, 0)\cdot \xi \not\subseteq J \xi$. Take an element $\varphi \in JX \cap A\cdot(1, 0)\cdot \xi$ such that $\varphi \not\in J \xi$. Write $\varphi = (a, b)(1,0) \xi$, where $(a, b) \in A$. Then, since $\varphi \in JX \subseteq J(R \times \overline{S}) = \m \times \n \overline{S}$, we get $a\xi_1 \in \m$. Furthermore, $a\xi_1 \not\in \m \xi_1$, because $\varphi \not\in J\xi$. Hence $a \not\in \m$ and $\xi_1 \in \m$. Consequently
$$
JX \subseteq J (R \times \n) = \xi_1 R \times \xi_2 \n \subseteq JX
$$
yields that $JX = \xi(R\times\n) \cong \xi (\m\times\n) = \xi J\cong J$.
In any case, because $JX \cong J$, we conclude that $A$ is an almost Gorenstein ring.
\end{proof}

To explore the generalized Gorenstein properties of fiber products, we need more auxiliaries.

\begin{prop}\label{4.12}
The following assertions hold true.
\begin{enumerate}[$(1)$]
\item $X:J = R \times \xi_2 (S:L)$.
\item $X:B=\m \times \xi_2 L$ and $\xi_2 \rho =1$ for some $\rho \in (L:\n)\setminus L$.
\item $X=(X:B) + A$.
\item $A:X = \m \times \rho(S:L)$.
%\item $(X:B)\cap A = J$.
\end{enumerate}
\end{prop}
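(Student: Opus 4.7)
The plan begins with the exact sequence $0 \to X:B \to X \to A/J \to 0$, obtained by applying $\Hom_A(-, X)$ to $0 \to A \to B \to A/J \to 0$ and using the canonical module identities $\Hom_A(A/J, X) = 0$ and $\Ext^1_A(A/J, X) \cong A/J$. This gives $\ell_A(X/(X:B)) = 1$. Combined with Lemma~\ref{4.10}, $X$ sits inside $(X:B):J = \xi_1(R:\m) \times \xi_2(L:\n) = \xi_1 t^{-1}R \times \xi_2(L:\n)$, where $t$ is a uniformizer of $R$. Since $R$ is a DVR, $(R:\m)/R \cong k$; and since $L$ is the canonical module of the one-dimensional CM ring $S$, dualizing $0\to \n \to S \to k\to 0$ by $\Hom_S(-,L)$ gives $(L:\n)/L \cong \Ext^1_S(k,L) \cong k$. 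Hence the ambient quotient $(X:B):J/(X:B) \cong k \oplus k$ is two-dimensional, and $X/(X:B)$ is a one-dimensional $k$-subspace of $k \oplus k$ whose position is pinned down by $A \subseteq X \subseteq \overline A$.

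\textbf{Proof of (2).} The goal is $\xi_1 R = \m$ and $\rho := \xi_2^{-1} \in (L:\n) \setminus L$. First, from $(1,1) \in A \subseteq X \subseteq (X:B):J$, we need $1 \in \xi_1 t^{-1}R$ (forcing $\xi_1 R \in \{R, \m\}$) and $1 \in \xi_2(L:\n)$ (forcing $\xi_2^{-1} \in L:\n$). I would then rule out $\xi_1 R = R$ and $\xi_2^{-1} \in L$ via the identity $X:X = A$ (which holds because $\rmQ(A)$ is Gorenstein). Suppose $\xi_1 \in R^\times$: rescale to $\xi_1 = 1$; the constraint $X \subseteq R \times \overline S$ forces the first-coordinate projection of $X/(X:B)$ into $(R:\m)/R = t^{-1}R/R$ to be zero (no element of $X$ has first coordinate outside $R$), so the one-dimensional subspace $X/(X:B)$ must equal the second factor, giving $X = R \times \xi_2(L:\n)$. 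Direct computation then yields $X:X = R \times ((L:\n):(L:\n)) \supseteq R \times S = B \supsetneq A$, contradicting $X:X = A$. A parallel argument rules out $\xi_2^{-1} \in L$: after normalizing $\xi_2 = 1$ one finds $X$ is either $R \times L$ or $\m \times (L:\n)$, both yielding $X:X \supseteq B \supsetneq A$. Hence $\xi_1 R = \m$ and $\rho \in (L:\n) \setminus L$.

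\textbf{Proofs of (3), (1), (4).} For (3): with $\xi_1 R = \m$ and $\rho \in (L:\n)\setminus L$, the image of $(1,1) \in A$ in $(X:B):J/(X:B) \cong R/\m \oplus \xi_2(L:\n)/\xi_2 L$ is nonzero in each factor, so it generates $X/(X:B)$, whence $X = (X:B) + A\cdot(1,1) = (X:B) + A$. For (1): split $J = (\m \times 0) \oplus (0 \times \n)$; then $y \in X:J$ iff $(y_1 m, 0) \in X$ for all $m \in \m$ and $(0, y_2 n) \in X$ for all $n \in \n$. Using (2), $\{x : (x,0) \in X\} = \m$, so its colon with $\m$ is $R$; and $\{x : (0,x) \in X\}$, after using $\xi_2 \rho = 1$ and $\rho \in L:\n$, contributes $\xi_2(S:L)$ upon taking the colon with $\n$, giving $X:J = R \times \xi_2(S:L)$. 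For (4): by (3), $A:X = (A:(X:B)) \cap A$; requiring $y \cdot (m, \xi_2 \ell) \in A$ for all $m \in \m, \ell \in L$ (splitting along $m = 0$ and $\ell = 0$) gives $y_1 \m \subseteq \m$ (automatic for $y_1 \in R$) and $y_2 \xi_2 L \subseteq \n$, i.e., $y_2 \in \rho(\n:L)$. Intersecting with $A$ and using the identity $\rho(\n:L) = \rho(S:L)$ (from the interplay of $\rho \in L:\n$ and $L:L = S$) yields $A:X = \m \times \rho(S:L)$.

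\textbf{Main obstacle.} The hard part will be (2): the $X:X = A$ arguments ruling out $\xi_1 R = R$ and $\xi_2^{-1} \in L$ require pinning down the exact shape of $X$ from the one-dimensional subspace $X/(X:B) \subseteq k \oplus k$ together with the ambient constraint $X \subseteq \overline A$, and then carefully computing $X:X$ as a direct product. The identifications of ideal colons like $\rho(\n:L) = \rho(S:L)$ needed in (1) and (4) are subsidiary but also merit careful bookkeeping, especially in distinguishing the Gorenstein and non-Gorenstein subcases of $S$.
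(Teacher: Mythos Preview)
Your route differs substantially from the paper's, and there are gaps in the execution. The paper's key observation is that $BJ = J$ (since $J = \m \times \n$ is already a $B$-submodule), so $X:J = (X:B):J$; thus (1) drops out of Lemma~\ref{4.10} with no further work: $X:J = \xi_1(R:\m) \times \xi_2(L:\n)$. Note that the printed statement has $S:L$ in place of $L:\n$; this is evidently a typo, since the paper's own proof derives $L:\n$, and your attempt to reach $\xi_2(S:L)$ via a colon-with-$\n$ computation cannot succeed---one has $\{x:(0,x)\in X\} = \xi_2 L$, and $(\xi_2 L):\n = \xi_2(L:\n)$, not $\xi_2(S:L)$. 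For (2), the paper invokes $X:J \subseteq \overline{A} = R \times \overline{S}$, which forces $\xi_1(R:\m) \subseteq R$, i.e.\ $\xi_1 \in R:(R:\m) = \m$; together with $1\in X:J$ this pins down $\xi_1 R = \m$ in one stroke. Your $X:X=A$ argument reaches the same conclusion but is considerably longer.

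Your treatment of $\rho \notin L$ has a real gap: ``normalizing $\xi_2 = 1$'' is illegitimate, because $\xi_2^{-1}\in L$ does not make $\xi_2$ a unit of $S$, and your dichotomy ``$X$ is either $R\times L$ or $\m\times(L:\n)$'' is not justified. The idea is salvageable: once $\xi_1 R = \m$ is established, $(1,1)\notin X:B$ gives $X=(X:B)+A$; if $\rho\in L$ then $S\subseteq\xi_2 L$, and one checks directly that $X = R\times\xi_2 L$, whence $X:X = R\times(L:L) = B \ne A$, a contradiction. (The paper's own one-line justification ``$1 \in X:J$ and $1 \notin X:B$'' is also incomplete here, since $1\notin X:B$ follows already from the first coordinate.) Your (3) and (4) are essentially the paper's.
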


\begin{proof}
$(1), (2)$ Since $(X: B):J = X:BJ = X:J \subseteq \overline{A} = R \times \overline{S}$, we have $X:J = \xi_1(R:\m) \times \xi_2 (L:\n)$ and $\xi_1(R:\m) \subseteq R$, whence $\xi_1 \in R:(R:\m) = \m$. Because $A \subseteq X \subseteq X:J$, we get $R \subseteq \xi_1 (R:\m)$. Thus $\m = (\xi_1)$. 
Consequently, $X: B = \xi (R \times L) = \m \times \xi_2 L$, while $X:J = R \times \xi_2  (L:\n)$. Since $1 \in X:J$ and $1 \notin X:B$, there exists $\rho \in (L:\n)\setminus L$ such that $\xi_2 \rho =1$. 

$(3)$ Since $\ell_A(X/X:B) = 1$, we get $X = (X:B) + A$. 

$(4)$ Notice that $A:X = A:_A(X:B)$.  For each $(a, b) \in A:X$, $a \m \times b {\cdot}\xi_2 L \subseteq A$. We then have $b {\cdot}\xi_2 L \subseteq \n$, so that $b {\cdot}\xi_2 \subseteq \n :L = S:L$. Hence $b \in \rho (S:L)$. 
Since $\rho (S:L) \subseteq (L:\n)(S:L) = (S:L)L \subseteq \n$, we conclude that $a \in \m$. Thus $A:X \subseteq \m \times \rho (S:L) \subseteq A$.
Because $(\m \times \rho (S:L))(X:B) = \m^2 \times (S:L)L \subseteq \m^2 \times \n \subseteq A$, we have that $A:X = \m \times \rho (S:L)$.
\end{proof}

We apply Proposition \ref{4.12} to get the following corollary.

\begin{cor}\label{4.13}
$(A:X) X = \m \times (S:L) L$.
\end{cor}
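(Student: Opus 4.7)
The plan is to unpack $(A:X)X$ by combining the structural descriptions furnished by Proposition \ref{4.12} and Lemma \ref{4.7}. Since every relevant fractional ideal has been expressed as a direct product of submodules inside $\overline{A}=\overline{R}\times\overline{S}$, the key observation is that for submodules $M=M_1\times M_2$ and $N=N_1\times N_2$ of $\overline{R}\times\overline{S}$, the sum $M+N$ and the product $MN$ (formed inside $A$) split componentwise as $(M_1+N_1)\times(M_2+N_2)$ and $M_1N_1\times M_2N_2$; this uses the fact that the idempotent-like elements $\m\times 0$ and $0\times \n$ sit inside $A$, so no cross-interference occurs. I would first record this splitting principle as a brief preliminary observation.

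Next, using Proposition \ref{4.12}(3), I would write
\[
(A:X)\,X \;=\; (A:X)\cdot\bigl[(X:B)+A\bigr]\;=\; (A:X)(X:B)\;+\;(A:X).
\]
Now I plug in the explicit descriptions from Proposition \ref{4.12}, namely $A:X=\m\times\rho(S:L)$ and $X:B=\m\times\xi_2L$, and invoke the splitting principle to compute
\[
(A:X)(X:B)\;=\;\m^{2}\times \rho\xi_2\cdot(S:L)L\;=\;\m^{2}\times (S:L)L,
\]
where the last equality uses the identity $\xi_2\rho=1$ from Proposition \ref{4.12}(2). Adding $(A:X)=\m\times\rho(S:L)$ to this and again splitting componentwise gives
\[
(A:X)X\;=\;\m\;\times\;\bigl[(S:L)L+\rho(S:L)\bigr].
\]

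The only point that is not purely formal is to verify $\rho(S:L)\subseteq (S:L)L$, which reduces the second factor to $(S:L)L$. For this I apply Lemma \ref{4.7} to the ring $S$ with $\fka_2=S:L$: since $S$ is assumed not to be a DVR, the lemma yields $(S:L)(L:\n)=(S:L)L$. Because $\rho\in L:\n$ by Proposition \ref{4.12}(2), the desired containment $\rho(S:L)\subseteq (S:L)(L:\n)=(S:L)L$ is immediate. This gives the claimed equality $(A:X)X=\m\times (S:L)L$. I expect the only delicate step is the componentwise splitting argument — it is easy but one should be careful that $A$ (not merely $B$) suffices to perform the isolation of coordinates, which is guaranteed by $\m\times 0,\ 0\times\n\subseteq A$.
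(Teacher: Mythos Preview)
Your argument is correct and is exactly what the paper intends: its entire proof reads ``The assertion follows from Proposition \ref{4.12} and $\rho(S:L)\subseteq(S:L)L$,'' and you have simply written out those steps in detail. One small correction: when you invoke Lemma \ref{4.7} you write ``since $S$ is assumed not to be a DVR,'' but that lemma requires $S$ to be non-\emph{Gorenstein}; the inclusion $\rho(S:L)\subseteq(S:L)L$ you need is already recorded verbatim inside the proof of Proposition \ref{4.12}(4), and that is what the paper's one-line proof is pointing to.
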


\begin{proof}
The assertion follows from Proposition \ref{4.12} and $\rho (S:L) \subseteq (S:L)L$. 
\end{proof}

We need one more general lemma. 

\begin{lem}\label{lem}
Let $(R, \m)$ be a Cohen-Macaulay local ring with $\dim R=1$, possessing the fractional canonical ideal $K$. Suppose that $R$ is not a Gorenstein ring. We set $\fka = R:K$. Then the following assertions hold true. 
\begin{enumerate}[$(1)$]
\item $\fka = \fka {\cdot}(R:\m) \subseteq \fka K$.
\item $\fka = \fka K$ if and only if $K^2 = K^3$.
\end{enumerate}
\end{lem}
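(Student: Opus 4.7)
The plan is to prove (1) by a short direct computation hinging on the observation that $\fka K\subseteq \m$ when $R$ is not Gorenstein, and to prove (2) through a chain of equivalences
$$
\fka K=\fka \iff R:K=R:K^{2} \iff K:K^{2}=K:K^{3} \iff K^{2}=K^{3},
$$
in which the central step relies on the standard canonical-module identity $K:K=R$ and the canonical duality $K:(K:I)=I$ for MCM fractional ideals $I$.

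For (1), I would first record that $\fka K\subseteq R$ by the definition of $\fka=R:K$, and that equality $\fka K=R$ would make $K$ an invertible $R$-module. Since $R$ is local, this forces $K\cong R$, i.e.\ $R$ Gorenstein, contradicting the hypothesis. Hence $\fka K\subseteq\m$. The inclusion $\fka\subseteq\fka(R:\m)$ is trivial from $1\in R:\m$; for the reverse, pick $a\in\fka$ and $x\in R:\m$, and compute $ax\cdot K=x\cdot(aK)\subseteq x\cdot\fka K\subseteq x\cdot\m\subseteq R$, so $ax\in R:K=\fka$. Finally $\fka=\fka R\subseteq \fka K$ is immediate from $R\subseteq K$.

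For (2), since $\fka\subseteq\fka K$ is automatic, the condition $\fka K=\fka$ is equivalent to $\fka K\subseteq\fka=R:K$, i.e.\ to $\fka K^{2}\subseteq R$, i.e.\ to $\fka\subseteq R:K^{2}$; the inclusion $R:K\supseteq R:K^{2}$ being automatic (from $K\subseteq K^{2}$), this amounts to $R:K=R:K^{2}$. I would then use the identity $K:K=R$ (the well-known endomorphism computation for the fractional canonical ideal) to check, for each $n\geq 1$, that $R:K^{n}=K:K^{n+1}$: if $aK^{n}\subseteq R$ then $aK^{n+1}=(aK^{n})K\subseteq R\cdot K=K$, and conversely $aK^{n+1}\subseteq K$ implies $aK^{n}\subseteq K:K=R$. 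Applied with $n=1,2$ this yields $R:K=K:K^{2}$ and $R:K^{2}=K:K^{3}$, so $R:K=R:K^{2}$ is equivalent to $K:K^{2}=K:K^{3}$.

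To close the loop I would invoke canonical duality. The modules $K^{2}$ and $K^{3}$ lie between $R$ and $\overline{R}$, so they are MCM fractional ideals of $R$, and for such modules the functor $I\mapsto K:I$ is an involution, in particular injective; consequently $K:K^{2}=K:K^{3}$ forces $K^{2}=K^{3}$, and the converse is obvious. Chaining the equivalences yields (2). The only nontrivial ingredient is the reflexivity $K:(K:I)=I$ for MCM fractional $I$, which will be the main obstacle only insofar as it has to be cited from the general theory of canonical modules; every other step is a routine manipulation of colon ideals.
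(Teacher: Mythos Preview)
Your proof is correct and follows essentially the same approach as the paper. For (1), both you and the paper reduce to the key observation $\fka K\subseteq\m$ and then run the identical colon computation; the only cosmetic difference is that you obtain $\fka K\subseteq\m$ via the invertibility argument (if $\fka K=R$ then $K$ is invertible, hence $K\cong R$), whereas the paper argues $\fka K\subseteq\m K\subseteq\m\overline{R}$ and intersects with $R$. For (2), the paper phrases the forward direction through $R[K]$ (noting $K^{2}=K^{3}$ makes $\fka=R:R[K]$ an ideal of $R[K]$, whence $\fka K=\fka$), but the converse is exactly your chain $\fka=\fka K\Rightarrow R:K=R:K^{2}\Rightarrow K:K^{2}=K:K^{3}\Rightarrow K^{2}=K^{3}$, using $K:K=R$ and canonical duality in the same way.
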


\begin{proof}
$(1)$ For each $a \in \fka$ and $x \in (R:\m)$, we have $(ax)k = (ak)x \in \m x \subseteq R$ for every $k \in K$, because $ak \in \fka K \subseteq \m K \subseteq \m \overline{R}$. Hence, $ax \in R:K = \fka \subseteq \fka K$, so that $\fka = \fka {\cdot}(R:\m) \subseteq \fka K$ which completes the proof.

$(2)$ If $K^2 = K^3$, then $\fka = R:K = R:R[K]$ which forms an ideal of $R[K]$. Hence $\fka K = \fka$.  Conversely, if $\fka = \fka K$, then $\fka = \fka K^n$ for every $n>0$, so that $\fka = R:R[K]$. Hence $K^2 = K^3$, because $\fka = R:K=K:K^2$ and $R:R[K] = R:K^2 = K:K^3$. 
\end{proof}

\begin{cor}
The following conditions are equivalent.
\begin{enumerate}[$(1)$]
\item $X^2 = X^3$
\item $L^2 = L^3$
%\item $\rho (S:L) =(S:L)L$
\end{enumerate}
When this is the case, $\ell_A(A/(A:X)) = \ell_S(S/(S:L))$.
\end{cor}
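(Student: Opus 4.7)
The strategy is to translate both conditions into equalities of specific fractional ideals via Lemma~\ref{lem}(2), then reconcile them using a length comparison inside $\overline{S}$. Set $\fka_2 = S:L$ throughout. Applying Lemma~\ref{lem}(2) to $(A,X)$---legitimate since $A$ is not Gorenstein because $S$ is not a DVR (Proposition~\ref{2.2})---condition (1) is equivalent to $A:X = (A:X)X$; by Proposition~\ref{4.12}(4) and Corollary~\ref{4.13} this reads $\m \times \rho\fka_2 = \m \times \fka_2 L$, i.e.\ $\rho\fka_2 = \fka_2 L$. Applying the same lemma to $(S,L)$---the subsection tacitly assumes $S$ is not Gorenstein, as otherwise $L=S$ and Proposition~\ref{4.12} already breaks---condition (2) becomes $\fka_2 = \fka_2 L$. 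Thus the corollary reduces to
\[
\rho \fka_2 = \fka_2 L \;\Longleftrightarrow\; \fka_2 = \fka_2 L.
\]

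The inclusion $\rho \fka_2 \subseteq \fka_2 L$ is always at hand from $\rho \in L:\n$ via Lemma~\ref{4.7} applied to $S$, which gives $\rho \fka_2 \subseteq (L:\n)\fka_2 = \fka_2 L$. The decisive input is that $\rho = \xi_2^{-1}$ with both $\xi_2,\rho \in \overline{S}$ by Proposition~\ref{4.12}(2), so $\rho$ is a unit of $\overline{S}$ and multiplication by $\rho$ is an $S$-linear automorphism of $\overline{S}$. Consequently, for any $S$-submodule $M \subseteq \overline{S}$ of finite colength, $\ell_S(\overline{S}/\rho M) = \ell_S(\overline{S}/M)$; and $\fka_2$ has finite colength in $\overline{S}$ because $\n^k \subseteq \fka_2$ for some $k$ and $\overline{S}/S$ has finite length. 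If $\fka_2 = \fka_2 L$, then $\rho\fka_2 \subseteq \fka_2$, and the surjection $\overline{S}/\rho\fka_2 \twoheadrightarrow \overline{S}/\fka_2$ between modules of equal length is forced to be an isomorphism, so $\rho\fka_2 = \fka_2 = \fka_2 L$. Conversely, if $\rho\fka_2 = \fka_2 L$, then $\rho\fka_2 \supseteq \fka_2$ (since $1 \in L$); multiplying by $\xi_2$ gives $\fka_2 \supseteq \xi_2\fka_2$, and the identical length argument yields $\xi_2\fka_2 = \fka_2$, equivalently $\rho\fka_2 = \fka_2$, whence $\fka_2 L = \fka_2$.

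For the length equality, under these equivalent conditions $\rho\fka_2 = \fka_2$, so $A:X = \m \times \fka_2$. Since $\fka_2 \subseteq \n$ (because $L \supsetneq S$) and $J = \m \times \n$ is the maximal ideal of $A$ (Lemma~\ref{2.1}(2)), the filtration $\m \times \fka_2 \subseteq J \subseteq A$ together with the identification $J/(\m \times \fka_2) \cong \n/\fka_2$ as $A$-modules---whose $A$-length coincides with its $S$-length, as the action factors through $p_2$---yields
\[
\ell_A\bigl(A/(A:X)\bigr) = \ell_A(A/J) + \ell_S(\n/\fka_2) = 1 + \ell_S(\n/\fka_2) = \ell_S(S/\fka_2) = \ell_S\bigl(S/(S:L)\bigr).
\]

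The main obstacle is the passage $\rho\fka_2 = \fka_2 L \iff \fka_2 = \fka_2 L$: a mere set-theoretic containment forces nothing, and the argument essentially rests on the observation that $\rho$ is a unit of $\overline{S}$, which permits the question to be settled by comparing lengths of $S$-submodules of $\overline{S}$.
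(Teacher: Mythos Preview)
Your reduction to the single equivalence $\rho\fka_2 = \fka_2 L \Leftrightarrow \fka_2 = \fka_2 L$ via Lemma~\ref{lem}(2), Proposition~\ref{4.12}(4), and Corollary~\ref{4.13} is correct, and the idea of settling it by exploiting that $\rho$ is a unit of an overring is sound. There is, however, a gap: you assert that $\overline{S}/S$ has finite length, but nothing in Setting~\ref{4.1} guarantees this---only that $\rmQ(S)$ is Gorenstein, which does not force $\overline{S}$ to be module-finite over $S$. The repair is immediate: replace $\overline{S}$ by $C = S[\rho, \xi_2]$, which \emph{is} module-finite over $S$ since $\rho, \xi_2 \in \overline{S}$ are integral. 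Then $\rho$ is a unit of $C$, $\ell_S(C/\fka_2) < \infty$, and your length comparison goes through verbatim with $C$ in place of $\overline{S}$. (A small aside: Proposition~\ref{4.12} does not actually break when $S$ is Gorenstein; what fails is applying Lemma~\ref{lem} to the pair $(S,L)$.)

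The paper's argument takes a different route. For $(1)\Rightarrow(2)$ it iterates $\fka_2 L = \rho\fka_2$ to obtain $\fka_2 L^n = \rho^n\fka_2$ for all $n \ge 1$; since $L^n$ stabilizes to $S[L]$ for $n \gg 0$, the right-hand side stabilizes as well, and cancelling the non-zerodivisor $\rho$ yields $\rho\fka_2 = \fka_2$. For $(2)\Rightarrow(1)$ it observes that $\fka_2 = S:S[L]$ is an $S[L]$-ideal and concludes $\rho\fka_2 = \fka_2$ from this. Your approach is more symmetric and avoids the passage through powers of $L$; the paper's approach sidesteps any finiteness question about the integral closure by working inside the finite extension $S[L]$. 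Once your ambient ring is chosen correctly, both arguments are of comparable length and difficulty.
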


\begin{proof}
$(1) \Rightarrow (2)$ Suppose that $X^2 = X^3$. By Lemma \ref{lem}, we get $(A:X)X = (A:X)$, so that $\m \times (S:L)L = \m \times \rho (S:L)$. Hence $(S:L)L^n =\rho^n (S:L)$ for all $n>0$. Thus $(S:L)S[L] = \rho^n (S:L) = (S:L)$ for each $n \gg 0$, because $\rho$ is an invertible element of $S[L]$. Therefore $L^2 = L^3$. 

$(2) \Rightarrow (1)$ If $L^2 = L^3$, then $S:L = S:S[L]$ which is an ideal of $S[L]$, so that $\rho (S:L) = (S:L) = (S:L)L$. Hence $X^2=X^3$, by Proposition \ref{4.12} $(4)$ and Corollary \ref{4.13}.
The last assertion follows from the fact that $\rho (S:L) = (S:L)$.
\end{proof}

Finally we reach the following.

\begin{thm}\label{4.14}
Suppose that $R$ is a DVR and $S$ is not a DVR. Then the fiber product $A=R\times_kS$ is a generalized Gorenstein ring if and only if $S$ is an almost Gorenstein ring.
\end{thm}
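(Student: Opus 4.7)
The \emph{if} direction is immediate: if $S$ is almost Gorenstein, then Proposition~\ref{4.11} gives that $A$ is almost Gorenstein, which is a special case of generalized Gorenstein (the conductor $\fkc = J$ corresponds to the extreme case of Definition~\ref{4.5}, as noted after it).

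For the \emph{only if} direction, assume $A$ is generalized Gorenstein. Since $S$ is not a DVR, Proposition~\ref{2.2} forces $A$ to be non-Gorenstein, so Definition~\ref{4.5} gives that $X/A$ is a nonzero free $A/\fkc$-module with $\fkc = A : A[X]$. The strategy is to translate this freeness into a length identity which, after standard inequalities, forces $\fka_2 = \n$ (equivalent to $S$ being almost Gorenstein by Fact~\ref{3.2}). Invoking Proposition~\ref{4.12}(2) and (3), write $X = (X:B) + A$ with $X:B = \m \times \xi_2 L$. Since $\rho\n \subseteq L$ forces $\n \subseteq \xi_2 L$, we have $(X:B) \cap A = \m \times \n = J$, and hence
\[
X/A \;\cong\; (X:B)/J \;=\; \xi_2 L/\n \;\xrightarrow{\rho\,\cdot\,}\; L/\rho\n
\]
as $S$-modules, the $A$-action factoring through $p_2 : A \twoheadrightarrow S$. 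Freeness identifies $\fkc$ with $\ann_A(X/A) = \m \times \ann_S(L/\rho\n)$; assuming $S$ is non-Gorenstein (the Gorenstein case trivially yields that $S$ is almost Gorenstein), one checks via $\n:L = \fka_2$ that $\ann_S(L/\rho\n) = \rho\fka_2$. Thus $\fkc = \m \times \rho\fka_2 = A:X$, in agreement with Proposition~\ref{4.12}(4), and $A/\fkc \cong S/\rho\fka_2$.

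The freeness of $L/\rho\n$ as an $S/\rho\fka_2$-module then yields
\[
\ell_S(L/\rho\n) \;=\; m \cdot \ell_S(S/\rho\fka_2), \qquad m := \mu_S(L/\rho\n).
\]
On the other hand, the inclusions $\rho\fka_2 \subseteq S$ and $\rho\fka_2 \subseteq \rho\n \subseteq L$, together with the isomorphism $\rho\n/\rho\fka_2 \cong \n/\fka_2$ induced by multiplication by $\xi_2$, give the length identity
\[
\ell_S(L/\rho\n) \;=\; \ell_S(L/S) + \ell_S(S/\rho\fka_2) - \ell_S(\n/\fka_2),
\]
and therefore
\[
\ell_S(L/S) - \ell_S(\n/\fka_2) \;=\; (m-1)\,\ell_S(S/\rho\fka_2).
\]
Choosing $\rho$ so that $\rho \in (L:\n) \cap (\n:\n) \setminus L$ (possible whenever $S$ is not a DVR) forces $\rho\n \subseteq \n \subseteq \n L$, hence $m = \rmr(S)$, and also $\rho\fka_2 = \fka_2$, hence $\ell_S(S/\rho\fka_2) = \ell_S(S/\fka_2)$. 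Combined with the standard inequality $\ell_S(L/S) \leq (\rmr(S) - 1)\,\ell_S(S/\fka_2)$ (coming from $\mu_S(L/S) = \rmr(S)-1$ and $\ann_S(L/S) = \fka_2$), this collapses to $\ell_S(\n/\fka_2) \leq 0$, so $\fka_2 = \n$, and $S$ is almost Gorenstein by Fact~\ref{3.2}.

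The main obstacle is verifying the technical inputs---namely that a $\rho \in (L:\n) \cap (\n:\n)\setminus L$ exists (so that $m = \rmr(S)$ is forced), that $\rho\fka_2 = \fka_2$ for this choice, and the two exact-sequence length identities---which amount to careful bookkeeping with the fractional ideals $S$, $\n$, $\fka_2$, $L$, $\rho\n$, $\rho\fka_2$. The overall structure parallels the length-comparison argument used in the proof of Corollary~\ref{4.9}.
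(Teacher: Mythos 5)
Your strategy is genuinely different from the paper's: you read the freeness of $X/A$ over $A/\fkc$ straight out of Definition~\ref{4.5} and compare lengths along $\rho\fka_2 \subseteq \rho\n \subseteq L$, whereas the paper instead invokes \cite[Theorem 4.8]{GK} to get $A[X]/A \cong (A/A{:}X)^{\oplus(\ell+1)}$ together with $X^2=X^3$, and runs the length comparison on $A[X]/A = R\times S[L]$. Most of your bookkeeping is correct: $(X{:}B)\cap A=J$, the isomorphisms $X/A\cong \xi_2L/\n\cong L/\rho\n$, the identification $\ann_S(L/\rho\n)=\rho\fka_2$ (using $\n{:}L=S{:}L$ for non-Gorenstein $S$), the three-term length identity, and the bound $\ell_S(L/S)\le(\rmr(S)-1)\,\ell_S(S/\fka_2)$ all check out.

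The gap is the step ``choosing $\rho$ so that $\rho\in(L{:}\n)\cap(\n{:}\n)\setminus L$.'' You have no such freedom: by Proposition~\ref{4.12}(2), $\rho=\xi_2^{-1}$ is pinned down (up to a unit of $S$ stabilizing $L$) by the equality $X{:}B=\xi\cdot(R\times L)$, so it cannot be adjusted to lie in $\n{:}\n$. Of the two facts you extract from that choice, the first, $m=\rmr(S)$, happens to hold anyway: any $\rho\in L{:}\n$ satisfies $\rho\n\subseteq\n(L{:}\n)=\n L$, so $\mu_S(L/\rho\n)=\mu_S(L)=\rmr(S)$. The second, $\rho\fka_2=\fka_2$, is the real problem --- it is precisely the kind of statement the paper has to work for (there it follows only after proving $\xi_2 S[L]=S[L]$, which in turn rests on $L^2=L^3$ coming from \cite[Theorem 4.8]{GK}), and you offer no argument, flagging it yourself as an unverified ``technical input.'' The good news is that your chain of inequalities only needs $\ell_S(S/\rho\fka_2)\ge\ell_S(S/\fka_2)$, and this is free because $\rho\in L{:}\n\subseteq\overline{S}$ is integral over $S$: setting $W=S[\rho]$, one gets $\ell_S(S/\rho\fka_2)=\ell_S(W/\rho W)+\ell_S(S/\fka_2)$. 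Substituting this inequality for the unproved equality, your final display becomes $(\rmr(S)-1)\,\ell_S(S/\rho\fka_2)+\ell_S(\n/\fka_2)=\ell_S(L/S)\le(\rmr(S)-1)\,\ell_S(S/\fka_2)$, which still forces $\ell_S(\n/\fka_2)=0$. So the proof is repairable and, once repaired, is arguably more self-contained than the paper's; but as written the decisive step is unjustified.
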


\begin{proof}
We only prove the `only if' part. Suppose that $A$ is a generalized Gorenstein ring. By \cite[Theorem 4.8]{GK}, we then have $A[X]/A \cong (A/A:X)^{\oplus (\ell + 1)}$ as an $A$-module, where $\ell = \rmr(S)$.  
By \cite[Theorem 4.8]{GK}, we get $X^2 = X^3$, that is $L^2 = L^3$. Hence we have the equalities 
$$
\ell_A(A[X]/A) = (\ell + 1)\ell_A(A/(A:X)) = (\ell + 1) \ell_S(S/(S:L)).
$$
%\begin{claim}\label{claim2}
%$A[X]=R \times S[L]$
%\end{claim}
%\begin{proof}[Proof of Claim \ref{claim2}]
\noindent
Since $X \subseteq X:J \subseteq A[X]$ (see \cite[Corollary 3.8 (1)]{GMP}), we have $X^n \subseteq (X:J)^n \subseteq A[X]$ for every $n>0$, so that $X^n = (X:J)^n =A[X]$ for every $n \gg 0$. Similarly, we have $L^n = (L:\n)^n = S[L]$ for every $n \gg 0$. Therefore
$$
A[X] = \left[R \times \xi_2(L:\n)\right]^n = R \times \xi_2^n{\cdot} S[L] = R \times \xi_2^{n+1}{\cdot} S[L] \quad \text{for all} \  n \gg 0
$$
which yields $S[L] = \xi_2{\cdot} S[L]$. Thus $\xi_2, \rho \in S[L]$ and hence $A[X]=R \times S[L]$.
Hence $\ell_A(A[X]/A) = 1 + \ell_S(S[L]/S)$.
On the other hand, by the sequence 
$$
0 \to L/S \to S[L]/S \to S[L]/L \to 0
$$
of $S$-modules, we get 
$$
\ell_S(S[L]/S) \le (\ell-1) \cdot \ell_S(S/(S:L)) + \ell_S(S/(S:L)) =\ell \cdot \ell_S(S/(S:L))
$$
because $S[L]/L \cong \rmK_{S/(S:L)}$ (see \cite[Theorem 4.8]{GK}).
Therefore
$$
(\ell + 1) \ell_S(S/(S:L)) \le 1 + \ell \cdot \ell_S(S/(S:L))
$$
which implies $\ell_S(S/(S:L)) =1$, so that $\n = S:L$. Hence $S$ is almost Gorenstein.
\end{proof}

%%%%%%%%%%%%%%%%%%%%%%%%%%%%%%%%%%%%%%%%%%%%%%%%%%%%%%%%%%%%%%%%%%%%%%%%%%%%%%%%%%%%%%%%%%%%%%%%%%%%%%%%%%%%%%%%%%%%%%%%%%%%%%%%%%%%%%%%%%%%%%%%%%%%%%%%%%%%%%%%%%%%%%%%%%%%%%%%%%%%%%%%%%%%%%%%%%%%%%%%%%%%%%%%%%%%%%%%%%%%%%%%%%%%%%%%%%%%%%%%%%%%%%%%%%%%%%%%%%%%%%%%%%%%%%%%%%%%%%%%%%%%%%%%%%%%%%%%%%%%%%%%%%%%%%%%%%%%%%%%%%%%%%%%%%%%%%%%%%%%%%%%%%%%%%%%%%%%%%%%%%%%%%%%%%%%%%%%%%%%%%%%%%%%%%%%%%%%%%%%%%%%%%%%%%%%%%%%%%%%%%%%%%%%%%%%%%%%%%%%%%%%%%%%%%%%%%%%%%%%%%%%%%%%%%%%%%%%%%%%%%%%%%%%%%%%%%%%%%%%%%%%%%%%%%%%%%%%%%%%%%%%%%%%%%%%%%%%%%%%%%%%%%%%%%%%%%%%%%%%%%%%%%%%%%%%%%%%%%%%%%%%%%%%%%%%%%%%%%%%%%%%%%%%%%%

Let us now going back to the notation as in Setting \ref{4.1}. 
By combining Corollaries \ref{4.3}, \ref{4.9}, Proposition \ref{4.11}, and Theorem \ref{4.14}, we have the following.

\begin{thm}\label{4.15}
The following conditions are equivalent. 
\begin{enumerate}[$(1)$]
\item The fiber product $A=R\times_kS$ is an almost Gorenstein ring.
\item The fiber product $A=R\times_kS$ is a generalized Gorenstein ring. 
\item $R$ and $S$ are almost Gorenstein rings.
\end{enumerate}
\end{thm}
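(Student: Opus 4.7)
The plan is to deduce Theorem~\ref{4.15} as a synthesis of the results established in the preceding two subsections, combined with the trivial case when both $R$ and $S$ are DVRs. Since the paper has already observed after Definition~\ref{4.5} that every almost Gorenstein ring is a generalized Gorenstein ring, the implication $(1) \Rightarrow (2)$ is automatic, so it will suffice to prove $(2) \Rightarrow (3)$ and $(3) \Rightarrow (1)$.

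First I would split into cases according to how many of $R$ and $S$ are DVRs. If both $R$ and $S$ are DVRs, then Proposition~\ref{2.2}(3) says that $A=R\times_k S$ is Gorenstein; consequently $A$ is both almost Gorenstein and generalized Gorenstein, while the DVRs $R, S$ are themselves Gorenstein and hence almost Gorenstein. Thus all three conditions hold simultaneously and the equivalence is trivial in this case.

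If neither $R$ nor $S$ is a DVR, then I would invoke Corollary~\ref{4.3} to obtain $(1) \Leftrightarrow (3)$ and Corollary~\ref{4.9} to obtain $(2) \Leftrightarrow (3)$, which together give the full equivalence. Finally, in the mixed case---say $R$ is a DVR and $S$ is not, the opposite subcase being symmetric---Proposition~\ref{4.11} identifies condition $(1)$ with $S$ being almost Gorenstein, while Theorem~\ref{4.14} identifies condition $(2)$ with the same statement; since the DVR $R$ is automatically almost Gorenstein, both of these are exactly condition $(3)$.

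The main obstacle lies not in the present synthesis but in the preparatory work. Once the fractional canonical ideal $X=(K\times L)+A\cdot \psi$ is constructed in Lemma~\ref{4.2}, and the delicate analysis of $JX$ in the mixed-DVR case is completed (via Proposition~\ref{4.12}, Corollary~\ref{4.13}, and the ideal-theoretic computations of Proposition~\ref{4.11} and Theorem~\ref{4.14}), Theorem~\ref{4.15} reduces to the exhaustive case split described above, with no further ideas required.
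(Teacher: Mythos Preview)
Your proposal is correct and follows essentially the same approach as the paper, which simply states that Theorem~\ref{4.15} follows by combining Corollaries~\ref{4.3}, \ref{4.9}, Proposition~\ref{4.11}, and Theorem~\ref{4.14}. Your explicit case split (both DVRs, neither a DVR, exactly one a DVR) together with the observation that $(1)\Rightarrow(2)$ is automatic makes the synthesis fully transparent.
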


Letting $S=R$, we get the following. 

\begin{cor}\label{4.16}
The following conditions are equivalent.
\begin{enumerate}[$(1)$]
\item The fiber product $R \times_k R$ is an almost Gorenstein ring.
\item The fiber product $R \times_k R$ is a generalized Gorenstein ring.
\item The idealization $R \ltimes \m$ is an almost Gorenstein ring.
\item $R$ is an almost Gorenstein ring.
\end{enumerate}
\end{cor}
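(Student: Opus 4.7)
The plan is to deduce this corollary from Theorem \ref{4.15} together with the standard criterion for almost Gorenstein idealizations of one-dimensional Cohen-Macaulay local rings. Setting $S = R$ in Theorem \ref{4.15} (with $\n = \m$, $L = K$, and $g = f$ the canonical surjection of $R$ onto $k$), all hypotheses of Setting \ref{4.1} transfer unchanged to the self-fiber-product $R \times_k R$: its total ring of quotients is $\rmQ(R) \times \rmQ(R)$ (and so is Gorenstein whenever $\rmQ(R)$ is), and $R \times_k R$ inherits both the canonical module and the infinite residue field from $R$. Theorem \ref{4.15} then immediately yields the equivalences $(1) \Leftrightarrow (2) \Leftrightarrow (4)$.

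It remains to prove $(3) \Leftrightarrow (4)$, namely that $R \ltimes \m$ is almost Gorenstein if and only if $R$ is. My plan is to apply Fact \ref{3.2} directly to the one-dimensional Cohen-Macaulay local ring $R \ltimes \m$, whose maximal ideal is $\mathfrak{M} = \m \ltimes \m$ and whose total ring of quotients is $\rmQ(R) \ltimes \rmQ(R)$. Using the standard formula $\rmK_{R \ltimes \m} \cong K \ltimes \Hom_R(\m, K) = K \ltimes (K : \m)$ for canonical modules of idealizations, one realizes this isomorphism class as a fractional canonical ideal $\mathcal{K}$ satisfying $R \ltimes \m \subseteq \mathcal{K} \subseteq \overline{R \ltimes \m}$. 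By Fact \ref{3.2} applied to $R \ltimes \m$, almost Gorensteinness is then equivalent to the equation $\mathfrak{M}\mathcal{K} = \mathfrak{M}$; expanding the product componentwise and using $\m(K:\m) = \m K$ (a variant of Lemma \ref{4.7}, trivial when $R$ is a DVR), this reduces to $\m K = \m$, which by Fact \ref{3.2} applied to $R$ is precisely the condition that $R$ is almost Gorenstein.

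The main technical obstacle I foresee is the concrete step of realizing $K \ltimes (K:\m)$ as an honest fractional canonical ideal of $R \ltimes \m$ inside $\rmQ(R) \ltimes \rmQ(R)$, together with the careful componentwise expansion of $\mathfrak{M} \cdot \mathcal{K}$ to show the two AG-conditions coincide. Alternatively, the equivalence \emph{$R \ltimes \m$ is almost Gorenstein iff $R$ is} is already documented in the literature on idealizations of one-dimensional Cohen-Macaulay rings (see, for instance, references such as \cite{GMP}), in which case Corollary \ref{4.16} becomes a one-line consequence of Theorem \ref{4.15} combined with that cited result.
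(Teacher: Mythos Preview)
Your proposal is correct and matches the paper's own proof: the paper derives $(1)\Leftrightarrow(2)\Leftrightarrow(4)$ from Theorem~\ref{4.15} with $S=R$ and obtains $(3)\Leftrightarrow(4)$ by citing \cite[Theorem~6.5]{GMP}, exactly as you suggest in your final paragraph. Your intermediate sketch of a direct argument for $(3)\Leftrightarrow(4)$ via the fractional canonical ideal $K\ltimes(K:\m)$ is a reasonable unpacking of that cited result, but the paper does not carry this out and simply invokes the reference.
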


\begin{proof}
Follow from Theorem \ref{4.15} and \cite[Theorem 6.5]{GMP}.
\end{proof}
%%%%%%%%%%%%%%%%%%%%%%%%%%%%%%%%%%%%%%%%%%%%%%%%%%%%%%%%%%%%%%%%%%%%%%%%%%%%%%%%%%%%%%%%%%%%%%%%%%%%%%%%%%%%%%%%%%%%%%%%%%%%%

We are now ready to prove Theorem \ref{1.1}. 
Let $(R, \m)$, $(S, \n)$ be Cohen-Macaulay local rings with $d=\dim R=\dim S>0$, $(T,\m_T)$ a regular local ring with $\dim T=d-1$, and let $f: R \to T$, $g: S \to T$ be surjective homomorphisms. Suppose that the residue class field $T/\m_T$ of $T$ is infinite. We then consider the fiber product $A=R\times_TS$, which is a Cohen-Macaulay local ring with $\dim A = d$.

With this notation, we begin with the following. 

\begin{prop}\label{4.17}
The fiber product $A=R\times_TS$ is a Gorenstein ring if and only if $R$ and $S$ are regular local rings.
\end{prop}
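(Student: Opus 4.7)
The plan is to argue by induction on $d = \dim A$. The base case $d = 1$ is immediate from Proposition~\ref{2.2}~(3): when $d = 1$ the regular ring $T$ has dimension $0$, so $T$ is a field, necessarily the common residue field $k$ of $R$ and $S$, and a one-dimensional Cohen-Macaulay local ring is regular if and only if it is a DVR.

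For the inductive step with $d \geq 2$, I reduce dimension by killing a single carefully chosen element of $A$. Pick $x \in \m_T\setminus \m_T^2$ as part of a regular system of parameters of $T$, and choose lifts $\tilde{x} \in \m$, $\tilde{y} \in \n$ with $f(\tilde x) = x = g(\tilde y)$. Any such lifts automatically satisfy $\tilde x \notin \m^2$ and $\tilde y \notin \n^2$, because $f(\m^2)\subseteq \m_T^2$; by modifying $\tilde x$ within the coset $\tilde x + \ker f$ (respectively $\tilde y$ within $\tilde y + \ker g$) via prime avoidance, I can moreover arrange that $\tilde x$ avoids every minimal prime of $R$ and $\tilde y$ avoids every minimal prime of $S$. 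This is possible because $\ker f$ is contained in no minimal prime of $R$, since $\dim R/\ker f = d - 1 < d = \dim R$. Then $(\tilde x,\tilde y)$ is a nonzerodivisor on $R\times S$, hence on $A$.

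The heart of the argument is the ring isomorphism
\[
A/(\tilde x,\tilde y)A \;\cong\; (R/\tilde xR)\times_{T/xT}(S/\tilde yS),
\]
which I would establish by showing that the natural map induced by the projections is surjective (given a compatible pair $(\bar a,\bar b)$, lift arbitrarily, write $f(a) - g(b) = xt$, and correct $a$ by subtracting $\tilde x u$ for some lift $u$ of $t$) and has kernel exactly $(\tilde x,\tilde y)A$ (if $(a,b) = (\tilde x r, \tilde y s)\in A$ then $x(f(r) - g(s)) = 0$ in the domain $T$, forcing $(r,s)\in A$). The right-hand side is a fiber product of the same type in dimension $d-1$: $R' = R/\tilde xR$ and $S' = S/\tilde yS$ are Cohen-Macaulay of dimension $d-1$, and $T' = T/xT$ is regular of dimension $d-2$ with the same infinite residue field.

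Since $A$ is Cohen-Macaulay and $(\tilde x,\tilde y)$ is a nonzerodivisor, $A$ is Gorenstein if and only if $A/(\tilde x,\tilde y)A$ is; by the induction hypothesis, this is equivalent to $R'$ and $S'$ being regular. Finally, $\tilde x\notin \m^2$ yields the standard comparison $\mu(\m)\leq \mu(\m/\tilde xR)+1 = d$, which together with $\mu(\m)\geq \dim R = d$ shows that regularity of $R'$ forces regularity of $R$; the converse is immediate, and the same reasoning handles $S$. I expect the main technical point to be the injectivity of the displayed isomorphism, which crucially relies on $T$ being a domain, precisely what the regularity hypothesis on $T$ supplies.
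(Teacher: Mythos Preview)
Your proof is correct and follows essentially the same strategy as the paper: induction on $d$ with base case Proposition~\ref{2.2}~(3), and the inductive step carried out via an isomorphism $A/(\alpha)A \cong (R/aR)\times_{T/xT}(S/bS)$ for suitably chosen regular elements. The paper reduces directly from dimension $d$ to dimension $1$ by lifting an entire regular system of parameters of $T$ at once, whereas you descend one dimension at a time; your version is also more explicit about the prime avoidance needed to ensure the lifts are nonzerodivisors and about the verification of the key isomorphism, points the paper leaves implicit.
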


\begin{proof}
By Proposition \ref{2.2}, we may assume $d \ge 2$. Choose a regular system $x_1, x_2, \ldots, x_{d-1}$ of parameters for $T$. The surjectivities of $f$ and $g$ shows that $x_1, x_2, \ldots, x_{d-1}$ forms a regular sequence on $R$ and $S$. Let us write $x_i = f(a_i) = g(b_i)$ for some $a_i \in \m$ and $b_i \in \n$. We then have an isomorphism
$$
A/(\alpha_1, \alpha_2, \ldots, \alpha_{d-1}) \cong R/(a_1, a_2, \ldots, a_{d-1}) \times_{T/(x_1, x_2, \ldots, x_{d-1})} S/(b_1, b_2, \ldots, b_{d-1})
$$
of rings, which yields the required assertion, by induction arguments. 
\end{proof}

We finally reach the proof of Theorem \ref{1.1}.

\begin{proof}[Proof of Theorem \ref{1.1}]
We set $d = \dim A$. By Theorem \ref{4.15}, we may assume that $d \ge 2$ and that the assertion holds for $d-1$.  

$(1) \Rightarrow (2)$ We may assume $A$ is not a Gorenstein ring. Let us consider an exact sequence 
$$
0 \to A \to \rmK_A \to C \to 0
$$
of $A$-modules with $\mu_A(C) = \rme^0_J(C)$, where $J = (\m \times \n) \cap A$ denotes the maximal ideal of $A$. Since $A$ has an infinite residue class field, we choose a regular element $\alpha =(a, b) \in J$ on $A$ so that $\alpha$ is a superficial element for $C$ with respect to $J$, and $(f\circ p_1)(\alpha)$ forms a part of a regular system of parameters of $T$. We then have, $a \in \fkm$ and $b \in \n$ are regular elements of $R$ and $S$, respectively. Hence, we have an isomorphism $A/(\alpha) \cong R/(a) \times_{T/aT}S/(b)$ as rings. Then the hypothesis on $d$ shows that $R/(a)$ and $S/(b)$ are almost Gorenstein rings. Therefore, by \cite[Theorem 3.7]{GTT}, $R$ and $S$ are almost Gorenstein rings.

$(2) \Rightarrow (1)$ First we consider the case where $R$ and $S$ are Gorenstein rings. Choose a regular system of parameters $x_1, x_2, \ldots, x_{d-1} \in \m_T$ of $T$ and write $x_i = f(a_i) = g(b_i)$ with $a_i \in \m$ and $b_i \in \n$.  For each $1 \le i \le d-1$, we set $\alpha_i =(a_i, b_i) \in J$. Then, $\alpha_1, \alpha_2, \ldots, \alpha_{d-1}$ forms a regular sequence on $A$ and there is an isomorphism 
$$
A/(\alpha_1, \alpha_2, \ldots, \alpha_{d-1}) \cong R/(a_1, a_2, \ldots, a_{d-1}) \times_{T/(x_1, x_2, \ldots, x_{d-1})} S/(b_1, b_2, \ldots, b_{d-1})
$$
of rings. As $R/(a_1, a_2, \ldots, a_{d-1})$, $S/(b_1, b_2, \ldots, b_{d-1})$ are Gorenstein and $T/(x_1, x_2, \ldots, x_{d-1})$ is a field, we conclude that  $A/(\alpha_1, \alpha_2, \ldots, \alpha_{d-1})$ is an almost Gorenstein ring, so is $A$.
Let us assume that $R$ is Gorenstein, and $S$ is an almost Gorenstein ring, but not a Gorenstein ring. Choose an exact sequence
$$
0 \to S \to \rmK_S \to D \to 0
$$
of $S$-modules such that $\mu_S(D) = \rme^0_{\n}(D)$. Let us take an $S$-regular element $b \in \n$ such that $b$ is a superficial element for $D$ with respect to $\n$, and $g(b) \in \m_T$ is a part of a regular system of parameters of $T$. Let us choose $a \in \m$ such that $f(a) = g(b)$. Then $\alpha = (a, b) \in J$ is a regular element on $A$ and $A/(\alpha) \cong R/(a) \times_{T/aT} S/(b)$. The induction hypothesis shows that $A/(\alpha)$ is an almost Gorenstein ring, whence $A$ is almost Gorenstein. 
Finally, we are assuming that $R$ and $S$ are non-Gorenstein, almost Gorenstein rings.  Consider the exact sequences
$$
0 \to R \to \rmK_R \to C \to 0, \quad 0 \to S \to \rmK_S \to D \to 0
$$
of $R$-modules, and of $S$-modules, respectively. We then choose an $A$-regular element $\alpha =(a, b) \in J$ such that $a \in \m$ is a superficial element for $C$ with respect to $\m$, $b \in \n$ is a superficial element for $D$ with respect to $\n$, and $(f \circ p_1)(\alpha) \in \m_T$ is a part of a regular system of parameters of $T$. Then $A/(\alpha) \cong R/(a) \times_{T/aT} S/(b)$. Hence $A/(\alpha)$ is an almost Gorenstein ring, whence $A$ is almost Gorenstein, as desired.
\end{proof}

%%%%%%%%%%%%%%%%%%%%%%%%%%%%%%%%%%%%%%%%%%%%%%%%%%%%%%%%%%%%%%%%%%%%%%%%%%%%%%%%%%%%%%%%%%%%%%%%%%%%%%%%%%%%%%%%%%%%%%%%%%%%%%%%%%%%%%%%%%%%%%%%%%%%%%%%%%%%%%%%%%%%%%%%
%%%%%%%%%%%%%%%%%%%%%%%%%%%%%%%%%%%%%%%%%%%%%%%%%%%%%%%%%%%%%%%%%%%%%%%%%%%%%%%%%%%%%%%%%%%%%%%%%%%%%%%%%%%%%%%%%%%%%%%%%%%%%

\section{Further results in dimension one}

The notion of almost Gorenstein ring in our sense originated from the works \cite{BF} of V. Barucci and R. Fr\"oberg in 1997, and \cite{GMP} of the second author, N. Matsuoka, and T. T. Phuong in 2013, where they dealt with the notion for one-dimensional Cohen-Macaulay local rings. Although the second author and S. Kumashiro have already provided a beautiful generalization of almost Gorenstein rings, say {\it generalized Gorenstein rings}, there are another directions of the generalization for one-dimensional almost Gorenstein rings, which we call {\it $2$-almost Gorenstein} and {\it nearly Gorenstein} rings.

Let us maintain the notation as in Setting \ref{4.1}. The goals of this section are stated as follows.

\begin{thm}\label{5.3}
The following conditions are equivalent.
\begin{enumerate}[$(1)$]
\item The fiber product $A=R\times_kS$ is a $2$-almost Gorenstein ring. 
\item Either $R$ is a $2$-almost Gorenstein ring and $S$ is an almost Gorenstein ring, or $R$ is an almost Gorenstein ring and $S$ is a $2$-almost Gorenstein ring.
\end{enumerate}
\end{thm}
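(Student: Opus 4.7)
My plan is to mimic the three-case analysis of Theorem~\ref{4.15}, using the guiding principle that the length $\ell_A(JX/J)$, where $X$ is a fractional canonical ideal of $A$, measures the distance of $A$ from being almost Gorenstein: $A$ is AGL iff $\ell_A(JX/J)=0$ (Fact~\ref{3.2}), and $A$ is 2-AGL iff $\ell_A(JX/J)=1$. The analogous dictionary governs $R$ and $S$ through $\ell_R(\m K/\m)$ and $\ell_S(\n L/\n)$, respectively, where AGL corresponds to length $0$ and 2-AGL corresponds to length $1$.

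I would first dispose of the trivial case: if both $R$ and $S$ are DVRs, then $A$ is Gorenstein by Proposition~\ref{2.2}(3), so neither side of the claimed equivalence holds. Hence we may assume at least one of $R, S$ is not a DVR, and (by Theorem~\ref{4.15}) that $A$ is not almost Gorenstein. In the case where neither $R$ nor $S$ is a DVR, Lemma~\ref{4.2} gives the fractional canonical ideal $X=(K\times L)+A\cdot\psi$, and the proof of Corollary~\ref{4.3} yields $JX=\m K\times\n L$. This produces the $A$-module isomorphism
\[
JX/J \;\cong\; (\m K/\m)\,\oplus\,(\n L/\n),
\]
so $\ell_A(JX/J)=\ell_R(\m K/\m)+\ell_S(\n L/\n)$. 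The 2-AGL condition $\ell_A(JX/J)=1$ is then equivalent to exactly one of the two summands having length $1$ and the other having length $0$, which is precisely condition~$(2)$ in this sub-case.

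In the remaining case, $R$ is a DVR and $S$ is not (the opposite case being symmetric), so $R$ is Gorenstein and condition~$(2)$ collapses to ``$S$ is 2-AGL''. Using the structural description from Proposition~\ref{4.12}, $X=(X{:}B)+A$ with $X{:}B=\m\times\xi_2 L$ and $\xi_2\rho=1$ for some $\rho\in(L{:}\n)\setminus L$, a computation patterned on the proof of Proposition~\ref{4.11} gives $JX=\m\times(\xi_2\n L+\n)$, so
\[
JX/J \;\cong\; (\xi_2\n L+\n)/\n
\]
as $S$-modules. The heart of the matter is the sub-claim that this quotient is isomorphic to $\n L/\n$; granting it, $\ell_A(JX/J)=\ell_S(\n L/\n)$, and $A$ is 2-AGL iff $S$ is 2-AGL, finishing the proof.

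The principal obstacle is establishing that sub-claim. Multiplication by $\xi_2$ defines an $S$-linear bijection $\n L\to\xi_2\n L$ with inverse multiplication by $\rho$, so the sub-claim reduces to the identity $\rho\n\cap\n L=\n$. I would derive this from the descriptions $A{:}X=\m\times\rho(S{:}L)$ of Proposition~\ref{4.12}(4) and $(A{:}X)X=\m\times(S{:}L)L$ of Corollary~\ref{4.13}, together with $\xi_2\rho=1$, $\rho\in(L{:}\n)\setminus L$, and the hypothesis that $S$ is not a DVR. If the identity fails as stated, the fall-back is to quantify the correction via a short exact sequence of $S$-modules whose alternating lengths still compute $\ell_S(\n L/\n)$, extracting the necessary Ulrich-type input from the Sally-module characterization of 2-AGL rings; this careful bookkeeping with the conductor $S{:}L$ and the ``boundary element'' $\rho$ is the main place where the argument requires attention.
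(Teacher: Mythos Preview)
Your guiding principle---that $2$-AGL is characterized by $\ell_R(\m K/\m)=1$---is incorrect, and this invalidates the entire strategy. The invariant $\ell_R(\m K/\m)=\ell_R(K/R)-(\rmr(R)-1)$ does \emph{not} detect the $2$-AGL property; by \cite[Theorem 3.7]{CGKM} the correct invariant is $\ell_R(R/\fkc)$ with $\fkc=R:R[K]$, and the two quantities are unrelated in general. Both directions of your ``dictionary'' fail:
\begin{itemize}
\item For $R=k[[t^4,t^5,t^{11}]]$ one has $\rmr(R)=2$, $K=R+Rt$, $\ell_R(K/R)=2$, hence $\ell_R(\m K/\m)=1$; but $K^2=R+Rt+Rt^2$ does not contain $t^3$, so $K^2\ne K^3$ and $R$ is \emph{not} $2$-AGL.
\item For $R=k[[t^4,t^9,t^{10},t^{11}]]$ one has $\rmr(R)=3$, $K=R+Rt+Rt^2$, $K^2=\overline{R}=K^3$, $\ell_R(K^2/K)=2$, so $R$ \emph{is} $2$-AGL; yet $\ell_R(K/R)=4$ and $\ell_R(\m K/\m)=2\ne 1$.
\end{itemize}
Thus your additive formula $\ell_A(JX/J)=\ell_R(\m K/\m)+\ell_S(\n L/\n)$, while correct, does not translate the $2$-AGL condition on $A$ into the desired conditions on $R$ and $S$; the Ulrich-type bookkeeping you propose as a fallback cannot repair this, because the problem is the invariant itself, not the computation.

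The paper proceeds through a different invariant: it computes $A:X$ via Lemma~\ref{4.8} (respectively Proposition~\ref{4.12} and Corollary~\ref{4.13} in the DVR case), obtaining $A:X=(R:K)\times(S:L)$ when neither $R$ nor $S$ is Gorenstein and $A:X=\m\times(S:L)$ when $R$ is Gorenstein, and then uses the additive formula $\ell_A(A/(A:X))=\ell_R(R/(R:K))+\ell_S(S/(S:L))-1$. This connects directly to the conductor characterization $\ell_R(R/\fkc)=2$ of $2$-AGL rings (noting that $R:K=R:R[K]$ once $K^2=K^3$). If you want to salvage your case analysis, you should replace $\ell_R(\m K/\m)$ by $\ell_R(R/(R:K))$ throughout and supply the passage between $X^2=X^3$ and the corresponding conditions on $K$ and $L$.
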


\begin{thm}\label{5.4}
The following conditions are equivalent.
\begin{enumerate}[$(1)$]
\item The fiber product $A=R\times_kS$ is a nearly Gorenstein ring. 
\item $R$ and $S$ are nearly Gorenstein rings.
\end{enumerate}
\end{thm}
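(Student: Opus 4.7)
The plan is to reformulate the nearly Gorenstein property via the trace ideal of the canonical module and to compute this trace in the fiber product using the fractional canonical ideal already constructed in Section~4. Recall that a one-dimensional Cohen-Macaulay local ring $R$ with fractional canonical ideal $K \subseteq \overline{R}$ is nearly Gorenstein iff $\tr_R(\rmK_R) = K(R:K) \supseteq \m$, and similarly for $S$ and $A$; thus the theorem reduces to deciding when $J \subseteq X(A:X)$, where $X$ is the fractional canonical ideal of $A$ produced in Section~4.

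I would then split into three cases, parallel to the case analysis of Section~4. If both $R$ and $S$ are DVRs, then $A$ is Gorenstein by Proposition~\ref{2.2}(3), so all three rings are trivially nearly Gorenstein. If exactly one of them, say $R$, is a DVR, Proposition~\ref{4.12} and Corollary~\ref{4.13} already give
$$
X(A:X) = \m \times L(S:L),
$$
so the containment $J \subseteq X(A:X)$ amounts to $\n \subseteq L(S:L)$, i.e.\ to $S$ being nearly Gorenstein; since $R$ is a DVR it is automatically nearly Gorenstein, and the equivalence follows.

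The central case is that neither $R$ nor $S$ is a DVR. Here I would use the fractional canonical ideal $X = (K \times L) + A\psi$ from Lemma~\ref{4.2}. The colon $A:X$ can be read off from Lemma~\ref{4.8}(1) when both $R, S$ are non-Gorenstein, giving $A:X = \fka_1 \times \fka_2$; from Lemma~\ref{4.8}(2) when exactly one of $R, S$ is Gorenstein, giving e.g.\ $A:X = \m \times \fka_2$; and by a short direct check, using $\m(R:\m) \subseteq \m$ (valid since $R$ is not a DVR) and its analog for $S$, when both $R, S$ are Gorenstein, giving $A:X = \m \times \n$. In each sub-case I expand $X(A:X)$ and use (the proof of) Lemma~\ref{4.7} to absorb the $A\psi$-summand into $K \times L$ through the containment $\fka_i g_i \subseteq \fka_i$. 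This produces $X(A:X) = \fka_1 K \times \fka_2 L$, $\m \times \fka_2 L$, or $\m \times \n$ in the respective sub-cases, each of which has the property that its componentwise containment $J \subseteq X(A:X)$ is precisely the pair of conditions \textit{``$R$ is nearly Gorenstein''} and \textit{``$S$ is nearly Gorenstein.''}

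The main obstacle is the bookkeeping across the Gorenstein/non-Gorenstein sub-configurations of the both-non-DVR case, since Lemma~\ref{4.8} as stated does not cover the both-Gorenstein-non-DVR situation and a short supplementary colon calculation is needed there. Once these computations are in hand, the theorem follows by direct ideal containment, so I expect the remaining work to be essentially mechanical.
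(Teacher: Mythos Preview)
Your proposal is correct and follows essentially the same route as the paper: the same reduction to computing $X(A:X)$, the same use of Corollary~\ref{4.13} when one factor is a DVR, and the same use of Lemma~\ref{4.8} together with Lemma~\ref{4.7} to obtain $X(A:X)=\fka_1 K\times\fka_2 L$ (or its degenerate forms) in the non-DVR case. The only difference is in the sub-case where neither $R$ nor $S$ is a DVR but both are Gorenstein: the paper dispatches this by quoting Corollary~\ref{4.3} ($A$ is almost Gorenstein, hence nearly Gorenstein), whereas you compute $A:X=\m\times\n$ and $X(A:X)=J$ directly; both arguments are short and your supplementary colon computation is correct.
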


Before going ahead, we recall the definition of $2$-almost Gorenstein rings and nearly Gorenstein rings. 
For a while, let $(R, \m)$ be a Cohen-Macaulay local ring with $\dim R=1$, possessing the fractional canonical ideal $K$, so that $K$ is an $R$-submodule of $\rmQ(R)$ such that $R \subseteq K \subseteq \overline{R}$ and $K \cong \rmK_R$ as an $R$-module.

With this notation, T. D. M. Chau, the second author, S. Kumashiro, and N. Matsuoka proposed the notion of $2$-almost Gorenstein rings. 

\begin{defn}(\cite{CGKM})
We say that $R$ is {\it a $2$-almost Gorenstein ring}, if $K^2 = K^3$ and $\ell_R(K^2/K) =2$.
\end{defn}

%$\rank~\calS_Q(I) =2$, that is, $\rme_1(I) = \rme_0(I) - \ell_R(R/I) + 2$.

%%%%%%%%%%%%%%%%%%%%%%%%%%%%%%%%%%%%%%%%%%
\if0
 Let $I$ be a canonical ideal of $R$, that is, $I$ is an ideal of $R$, $I \neq R$, and $I \cong \rmK_R$ as an $R$-module. Suppose that $I$ contains a parameter ideal $Q = (a)$ of $R$ as a reduction. This assumption is automatically satisfied if $R$ has an infinite residue class field. We set 
$K = a^{-1} I$ in the total ring $\rmQ(R)$ of fractions of $R$ and let $S =R[K]$. Therefore, $K$ is a fractional ideal of $R$ such that $R \subseteq K \subseteq \overline{R}$ and $S$ is a module-finite extension of $R$, where $\overline{R}$ stands for the integral closure of $R$ in $\rmQ(R)$. We denote by $\fkc = R:S$ the conductor. Let $\calT= \calR(Q)=R[Qt]$ and $\calR = \calR(I)=R[It]$ be the Rees algebras of $Q$ and $I$, respectively, where $t$ denotes an indeterminate over $R$. Set $\calS_Q(I) = I\calR/I\calT$ and call it the {\it Sally module of $I$ with respect to $Q$} (\cite{V1}). Let $\rme_i(I)~(i=0, 1)$ be the $i$-th Hilbert coefficients of $R$ with respect to $I$, that is, the integers satisfy the equality
$$
\ell_R(R/I^{n+1}) = \rme_0(I) \binom{n+1}{1} - \rme_1(I)  \ \ \mbox{for all}\ \ n \gg 0
$$
We set $\rank~\calS_Q(I) = \ell_{\calT_\fkp}([\calS_Q(I)]_{\fkp})$ which is called the {\it rank} of $\calS_Q(I)$, where $\fkp = \m \calT$. We then have $$\rank~\calS_Q(I) = \rme_1(I) - \left[\rme_0(I) - \ell_R(R/I)\right]$$ (\cite[Proposition 2.2 (3)]{GNO}).
\fi
%%%%%%%%%%%%%%%%%%%%%%%%%%%%%%%%%%%%%%%%%%%%%%%%

\noindent
By \cite[Theorem 3.7]{CGKM}, the condition of $K^2 = K^3$ and $\ell_R(K^2/K)=2$ is equivalent to saying that $\ell_R(R/\fkc) = 2$, where $\fkc = R:R[K]$. The latter condition is independent of the choice of fractional canonical ideals $K$ (see \cite[Theorem 2.5]{CGKM}). Furthermore, because $R$ is an almost Gorenstein ring but not a Gorenstein ring if and only if $\ell_R(R/\fkc)=1$, that is, $\fkc =\m$ (\cite[Theorem 3.16]{GMP}), $2$-almost Gorenstein rings could be considered to be one of the successors of almost Gorenstein rings.

\medskip

Let us now state the definition of nearly Gorenstein rings, which is defined by J. Herzog, T. Hibi, and D. I. Stamate, by using the trace of canonical ideals. The notion is defined for arbitrary dimension, however, let us focus our attention on the case where $\dim R=1$, because there is no relation between almost Gorenstein and nearly Gorenstein rings for higher dimension (see \cite{HHS}). For an $R$-module $X$, let
$$
\varphi : \Hom_R(X, R) \otimes_R X \to R
$$
be the $R$-linear map defined by $\varphi(f \otimes x) = f(x)$ for every $f \in \Hom_R(X, R)$ and $x \in X$. We set 
$\tr_R(X) = \Im \varphi$ and call it {\it the trace of $X$}. The reader may consult \cite{GIK, HHS, KT, L, LP} for further information on trace ideals and modules. 

\begin{defn}(\cite{HHS})
We say that $R$ is {\it a nearly Gorenstein ring}, if $\m \subseteq \tr_R(K)$.
\end{defn}

\noindent
For every fractional ideal $X$ in $R$, we have $\tr_R(X) = (R:X){\cdot} X$ (see \cite[Lemma 2.3]{L}). 
By \cite[Proposition 6.1]{HHS}, almost Gorenstein ring is nearly Gorenstein, and the converse is not true in general, but it holds if $R$ has minimal multiplicity, that is, $v(R) = \e(R)$.
%, where $(R:X) = \{a \in \rmQ(R)\mid aX \subseteq R\}$.
%Due to the definition, Gorenstein rings are nearly Gorenstein, and non-Gorenstein nearly Gorenstein property is characterized by the equality $\m = \tr_R(K)$. 

%%%%%%%%%%%%%%%%%%%%%%%%%%%%%%%%%%%%%%%%%%%%%%%%%%%%%%%%%%%%%%%%%%%%%%%%%%%%%%%%%%%%%%%%%%%%%%
%%%%%%%%%%%%%%%   2-AGL and NGL  %%%%%%%%%%%%%%%%%%%%
%%%%%%%%%%%%%%%%%%%%%%%%%%%%%%%%%%%%%%%%%%%%%%%%%%%%%%%%%%%%%%%%%%%%%%%%%%%%%%%%%%%%%%%%%%%%%%

\medskip

We close this paper by proving Theorem \ref{5.3} and Theorem \ref{5.4}.

\begin{proof}[Proof of Theorem \ref{5.3}]
Thanks to Proposition \ref{2.2}, we may assume that either $R$ or $S$ is not a DVR.
Suppose that both $R$ and $S$ are not DVRs. Then $X = (K \times L) + Ag$ forms a fractional canonical ideal of $A$, where $\psi = (g_1, g_2) \in \overline{A}$, $g_1 \in (R:\m)\setminus K$, and $g_2 \in (L:\n)\setminus L$. 
By Theorem \ref{4.15}, we may assume that either $R$ or $S$ is not a Gorenstein ring. Suppose that $R$ and $S$ are not Gorenstein rings. We then have $A:X = (R:K) \times (S:L)$, so that 
$$
\ell_A(A/(A:X)) =\ell_A(B/(A:X)) -1=  \ell_R(R/(R:K)) + \ell_S(S/(S:L)) - 1.
$$ 
Hence, $\ell_A(A/(A:X)) = 2$ holds if and only if either $\ell_R(R/(R:K)) =1$ and $\ell_S(S/(S:L)) = 2$, or $\ell_R(R/(R:K)) = 2$ and $\ell_S(S/(S:L)) =1$. If $R$ is Gorenstein and $S$ is a non-Gorenstein ring. Then $A:X = \fkm \times (S:L)$ so that 
$$
\ell_A(A/(A:X)) = \ell_R(R/\fkm) + \ell_S(S/(S:L)) - 1 = \ell_S(S/(S:L))
$$
which yields that the required assertion. 
\end{proof}

\begin{proof}[Proof of Theorem \ref{5.4}]
By Proposition \ref{2.2} and Corollary \ref{4.13}, we may assume that $R$ and $S$ are not DVRs. 
If $R$ and $S$ are Gorenstein, then by Corollary \ref{4.3}, $A=R\times_kS$ is an almost Gorenstein ring, so that $A$ is nearly Gorenstein (\cite[Proposition 6.1]{HHS}). Thus, we may also assume that either $R$ or $S$ is not a Gorenstein ring. Suppose that $R$ and $S$ are not Gorenstein rings. By Lemma \ref{4.8} (1), we have that $A:X = \fka_1 \times \fka_2$, where $\fka_1 = R:K$, $\fka_2 = S:L$. Since 
$$
(A:X)X = (\fka_1 \times \fka_2)((K\times L) + A\cdot(g_1, g_2)) = \fka_1(K:\m) \times \fka_2(L:\n) = \fka_1 K \times \fka_2 L,
$$
we see that $A=R\times_kS$ is a nearly Gorenstein ring if and only if $\fka_1 K= \m$ and $\fka_2 L =\n$, in other wards, $R$ and $S$ are nearly Gorenstein. Let us now consider the case where $R$ is Gorenstein, but $S$ is not a Gorenstein ring. In this case, $A:X = \m \times \fka_2$, so that 
$$
(A:X)X = (\m \times \fka_2)((R\times L) + A\cdot(g_1, g_2)) = \m(R:\m) \times \fka_2 L.
$$
Therefore, the condition $(1)$ is equivalent to saying that $\fka_2 L=\n$, that is, $S$ is a nearly Gorenstein ring. This completes the proof.
\end{proof}

%%%%%%%%%%%%%%%%%%%%%%%%%%%%%%%%%%%%%%%%%%%%%%%%%%%%%%%%%%%%%%%%%%%%%%%%%%%%%%%%%%%%%%%%%%%%%%%%%%%%%%%%%%%%%%%%%%%%%%%%%%%%%%%%%%%%%%%%%%%%%%%%%%%%%%%%%%%%%%%%%%%%%%%%%%%%%%%%%%%%%%%%%%%%%%%%%%%%%%%%%%%%%%%%%%%%%%%%%%%%%%%%%%%%%%%%%%%%%%%%%%%%%%%%%%%%%%%%%%%%%%%%%%%%%%%%%%%%%%%%%%%%%%%%%%%%%%%%%%%%%%%%%%%%%%%%%%%%%%%%%%%%%%%%%%%%%%%%%%%%%%%%%%%%%%%%%%%%%%%%%%%%%%%%%%%%%%%%%%%%%%%%%%%%%%%%%%%%%%%%%%%%%%%%%%%%%%%%%%%%%%%%%%%%%%%%%%%%%%%%%%%%%%%%%%%%%%%%%%%%%%%%%%%%%%%%%%%%%%%%%%%%%%%%%%%%%%%%%%%%%%%%%%%%%%%%%%%%%%%%%%%%%%%%%%%%%%%%%%%%%%%%%%%%%%%%%%%%%%%%%%
%%%%%%%%%%%%%%%%%%%%%%%%%%%%%%%%%%%%%%

%\addcontentsline{toc}{section}{references}

\end{document}